\newtheorem{theorem}{Theorem}[section]
\newtheorem{lemma}[theorem]{Lemma}
\newtheorem{proposition}[theorem]{Proposition}
\newtheorem{corollary}[theorem]{Corollary}
\theoremstyle{definition}
\theoremstyle{remark}
\newtheorem{remark}[theorem]{Remark}
\numberwithin{equation}{section}
\DeclareMathAlphabet{\mathpzc}{OT1}{pzc}{m}{it}
\newcommand{\abs}[1]{\left|#1\right|}
\newcommand{\tr}{\textup{tr}}
\newcommand{\Rm}{\textup{Rm}}
\newcommand{\Ric}{\textup{Ric}}
\newcommand{\norm}[1]{\left\| #1 \right\|}
\newcommand{\Vol}{\textup{Vol}}
\newcommand{\FS}{\textup{FS}}
\newcommand{\R}{\mathbb{R}}
\newcommand{\C}{\mathbb{C}}
\newcommand{\V}{\V}
\newcommand{\CP}{\mathbb{CP}}
\renewcommand{\H}{\mathcal{H}}
\renewcommand{\V}{\mathcal{V}}
\newcommand{\D}[2]{\frac{\partial #1}{\partial #2}}
\newcommand{\ddbar}{\partial \bar{\partial}}
\begin{document}
\title[RF on $\CP^1$-bundles over product of KE manifolds]{Ricci Flow on $\CP^1$-bundles over a Product of K\"ahler-Einstein Manifolds}
\author{Frederick Tsz-Ho Fong}
\address{Frederick T.-H. Fong, Department of Mathematics, Hong Kong University of Science and Technology, Clear Water Bay, Kowloon, Hong Kong SAR}
\email{frederick.fong@ust.hk}
\author{Hung Tran}
\address{Hung Tran, Department of Mathematics and Statistics,
	Texas Tech University, Lubbock, TX 79409}
\email{hung.tran@ttu.edu}
\maketitle

\begin{abstract}
In this paper, we study the Ricci flow on $\CP^1$-bundles over a product of K\"ahler-Einstein manifolds whose initial metric $g_0$ is constructed by the ansatz used in works by M. Wang et. al. We prove that the ansatz is preserved along the Ricci flow. Furthermore, in the K\"ahler case, we proved that Type I finite-time singularity must occur. 
\end{abstract}

\section{Introduction}

The Ricci flow is one of the central topics in geometric analysis and is one of the most extensively studied geometric flows. It was introduced by Hamilton in \cite{H3}, and has been used to resolve the long-standing Poincar\'e Conjecture by Perelman \cite{perelman1,perelman2,perelman3}. One important component in many works on the Ricci flow is to study the blow-up or convergence limit of the flow. To this end, it is then essential to understand how fast the curvature blows up as the flow approaches to the maximal existence time. Following Hamilton's classification in \cite{Hsurvey}, when the Ricci flow approaches a finite-time singularity at $T < \infty$, we say that the singularity is of Type I if there exists a constant $C > 0$ such that for any $t \in [0,T)$ we have
\[\abs{\Rm(g(t))}_{g(t)} \leq \frac{C}{T-t}.\]
Otherwise, we say that the singularity is of Type II.

Type I rescaling is a more desirable 
because the blow-up factor is essentially the inverse linear expression $\frac{1}{T-t_i}$. The rescaled limit $g_i(t) := \frac{1}{T-t_i}g\big(t_i + (T-t_i)t\big)$ would then converge in Cheeger-Gromov sense to a srhinking gradient Ricci soliton as shown in \cite{emt10}. It is therefore of strong interest in knowing under what conditions the Ricci flow would encounter Type I singularity.

In the K\"ahler case, it has recently been proved by Conlon-Hallgren-Ma in \cite{CHM25typeI} that any Ricci flow on compact K\"ahler surfaces $X$ (i.e. $\dim_\C X = 2$) with finite-time singularity $T < \infty$ and $\inf_{[0,T)}\Vol(X,g(t)) > 0$ must also encounter Type I singularity, and the blow-up limit is the gradient shrinking K\"ahler-Ricci soliton on the $\C$-bundle $\mathcal{O}(-1)$ over $\CP^1$ constructed by Feldman-Ilmanen-Knopf in \cite{fik03}. In higher complex dimensions, it was proved by Li-Tian-Zhu in \cite{LTZ24singular} that Type II singularity could occur on some Fano compactification of a semi-simple complex Lie group if it admits no K\"aher-Einstein metrics. 

In contrast, it appears that if the manifold admits some sort of good symmetry, then it is more likely that Type I singularity occurs. In \cite{Fong14}, the first-named author proved that the K\"ahler-Ricci flow on $\CP^1$-bundle $\mathbb{P}(\mathcal{O} \oplus L)$ over a K\"ahler-Einstein manifold $\Sigma$ would encounter Type I singularity if the initial metric satisfies the Calabi's $U(n)$-symmetry and the $\CP^1$-fibers of the bundle collapse as $t \to T$. The rescaled Ricci flow would converge in pointed Cheeger-Gromov's sense to the direct product $\C^{\dim_\C \Sigma} \times \CP^1$. On a $\CP^1$-bundle $\mathbb{P}(\mathcal{O} \oplus\mathcal{O}(-k))$ over $\CP^{n-1}$ where the initial K\"ahler metric has Calabi's $U(n)$-symmetry, Song proved in \cite{Song15} that Type I singularity must occur when the exceptional divisor contracts to a point as $t \to T$. Subsequently, Guo and Song also proved in \cite{GS16} that the Type I parabolic blow-up limit along the exceptional divisor is the unique $U(n)$-complete shrinking K\"ahler-Ricci soliton on $\C^n$ blown-up at a point constructed in \cite{fik03}. 

Recently, Jian-Song-Tian \cite{jst23} generalized the above-mentioned works by studying the K\"ahler-Ricci flow on higher rank projective bundle $\mathbb{P}(\mathcal{O}_Z \oplus L^{\oplus m})$, $m \geq 1$, over a K\"ahler-Einstein manifolds $Z^n$. Assuming Calabi's $U(m+n)$-symmetry, they proved that Type I singularity must occur, and the blow-up limit near the singularity is the shrinking K\"ahler-Ricci soliton with Calabi symmetry on the total space of the vector bundle $L^{\oplus m}$ over $Z^n$. In addtion, there have been several recent developments in precise understanding of Ricci flows on manifolds with different types of symmetry: \cite{Appleton23}($U(2)$-metric on $T\mathbb{S}^2$), \cite{Francesco20}($SU(2)$-metric on $\mathbb{R}^4$),\cite{wu25asymptotic} ($SO(n)$-metric on $\mathbb{R}^{n+1}$), \cite{IKS25} (multiple-warped product), and the aforementioned \cite{CHM25typeI} (compact K\"{a}hler surfaces with non-collapsed finite time singularities).

It is not a coincidence that many of the above-mentioned works study the K\"ahler-Ricci flow on underlying manifolds which admit some bundles or fibration structures, as these underlying manifolds appear frequently in the context of minimal model program proposed by Song-Tian in \cite{ST17KRf}. In fact, in complex dimension 3, it was shown by Tosatti-Zhang in \cite{TZ15} that if the K\"ahler-Ricci flow encounters finite-time singularity, the underlying manifold must admit a Fano fibration. Apart from the above-mentioned references, there are other works investigating the finite-time singularity of a K\"ahler-Ricci flow, sharing similar contexts as in this paper, such as \cite{SW11,SW13i,SW13ii,SY12} and the references therein.

To enrich the perspective of the singularity analysis of Ricci flow with symmetry, this article studies the Ricci flow on a particular family of metrics, with the ansatz used by Wang-Wang \cite{WW98} and Dancer-Wang \cite{DW11coho} among others. Their works were about the construction of Einstein metrics and Ricci solitons on manifolds foliated by equidistant hypersurfaces that 
are circle bundles over products of K\"ahler-Einstein manifolds. 
We will recall fundamental components of the ansatz in detail in Section \ref{sect:DW}. Roughly speaking, this ansatz is built upon a submersion of a Riemannian manifold $\pi : P \to N$ onto a \emph{product} of K\"ahler-Einstein manifolds $N = N_1 \times \cdots \times N_r$ with circle fibers, such that there exists a contact $1$-form $\eta$ on $P$. One then defines a metric $g$ on the product $[0,s_0] \times P$ with metric of the form given by \eqref{eq:DW} and some closing conditions described in Subsection \ref{subsect:closing}. This ansatz generalizes the Calabi's $U(n)$-symmetry whose base manifold is one single homogeneous K\"ahler-Einstein manifold.
\newpage
Working on the above-mentioned ansatz, we study the Ricci flow and its singularity analysis, and obtain the following:

\vskip 0.3cm

\noindent\textbf{Main Result.} \emph{Let $M \to N$ be a $\CP^1$-bundle over $N = N_1 \times \cdots \times N_r$ which is a product of compact K\"ahler-Einstein manifolds $(N_i, J_i, g_i)$ such that the initial metric $g_0$ is constructed by the ansatz described in Section \ref{sect:DW}. Then, the ansatz is preserved under the Ricci flow (Proposition \ref{preservingansatz}). Furthermore, in the K\"ahler case, the Ricci flow solution must encounter Type I finite-time singularity (Theorem \ref{thm:TypeI}).}

\vskip 0.3cm

Our results generalize some former works such as \cite{Fong14,Song15,GS16,Maximo14} by keeping the fibers to be $\CP^1$ but allowing the base manifold to a \emph{product} of K\"ahler-Einstein manifolds, whereas these former work required the base manifold to be \emph{one} K\"ahler-Einstein manifold. Such a generalization is also different from the recent work \cite{jst23} which studies the \emph{higher-rank} bundles over \emph{one} K\"ahler-Einstein manifold.

It would also be interesting, as in the above-mentioned former works, to identify the blow-up singularity model by studying its pointed Cheeger-Gromov limit. If the singularity is caused by the collapsing of $\CP^1$-fibers, then by the same argument as in \cite{Fong14} one can prove that the blow-up model is the product $\CP^1 \times \C^n$ where $n = \dim_\C N$. However, if the finite-time singularity is caused by the contraction of some (but not all) of the factor $N_i$'s, it is not straight-forward to find out its blow-up limit. We will outline the possibilities and the detail will be discussed in forthcoming work.

\vskip 0.2cm
\noindent \textbf{Acknowledgement.} The first-named author is partially supported by the General Research Fund \#16305625 of the Hong Kong Research Grants Council. Part of this material is based upon work supported by the National Science Foundation under Grant No. DMS-1928930, while the second-named author was in residence at the MSRI in Berkeley, California, during the Fall semester of 2024. We would also like to thank the Vietnam Institute for Advanced Study in Mathematics (VIASM) for hosting us in the summer of 2025.

\section{An Ansatz with Equidistant Hypersurfaces} 

In this section, we give a definition and describe properties of a general ansatz with Calabi's symmetry as its special case. This particular setting was considered in \cite{WW98,DW11coho}, among others, to construct Einstein metrics and Ricci solitons. 

\subsection{The ansatz}
\label{sect:DW}

The ansatz (c.f. \cite[Section 4]{DW11coho}) used in \cite{WW98} and \cite{DW11coho} is described as follows. Let
\[\big(N_i^{2n_i}, J_{N_i}, g_{N_i}, \omega_{N_i}\big), \quad 1 \leq i \leq r\]
be compact K\"ahler-Einstein manifolds of real dimension $2n_i$ with $\Ric(g_{N_i}) = k_i g_{N_i}$ for constant $k_i \in \R$. Here $J_{N_i}$ is the complex structure of $N_i^{2n_i}$, and $\omega_{N_i}$ is the K\"ahler form defined by
\[\omega_{N_i}(X,Y) = g_{N_i}(J_{N_i}X, Y)\]
for any smooth vector fields $X, Y$. We denote the product manifold of these $N_i$'s by
\[N := N_1^{2n_1} \times \cdots \times N_r^{2n_r}.\]

Next we consider another Riemannian manifold $P$ of real dimension 
\[\dim P = 1+\sum_{i=1}^r \dim N_i\] such that there exists a submersion $\pi : P \to N$ with circle fibers, and there exists an $1$-form $\eta$ on $P$ satisfying
\[d\eta = \sum_{i=1}^r q_i \pi_i^*\omega_{N_i}\]
for integers $q_i \in \mathbb{Z} \backslash \{0\}$. We let $\zeta$ in the kernel of $\pi_*$ 
be the dual vector field of $\eta$ such that $\eta(\zeta) = 1$. A particular case is given by the Hopf fibration of an odd-dimensional sphere over the complex projective space.  

Now consider a smooth manifold $M$ with a dense subset $M_0$ which is diffeomorphic to $I \times P$ where $I$ is an open interval. For each $s \in I$, we consider a metric $g_s$ on $\{s\} \times P$ of the form
\[g_s = H^2(s) \eta \otimes \eta + \sum_{i=1}^r F_i^2(s) \pi_i^*g_{N_i}\]
where $\pi_i : P \to N_i$ is the composition of $\pi : P \to N$ composed with the $i$-th canonical projection map $N \to N_i$. The metric on $M_0$ is  in the form of
\begin{equation}
\label{eq:DW}
g = ds^2 + g_s = ds^2 + H^2(s) \eta \otimes \eta + \sum_{i=1}^r F_i^2(s) \pi_i^*g_{N_i}
\end{equation}
The complete metric on $M$ is obtained by smooth compactification at end points of $I$ for which the closing condition will be discussed later. \\

Let's examine this construction in more details. Denote $E := \ker(\eta : TP \to \R)$, then $TM_0$ can then be decomposed into
\[TM = \textup{span}\left\{\D{}{s}\right\} \oplus \textup{span}\left\{\zeta\right\} \oplus E.\]
Note that $\zeta$ may not be an integrable vector field. With some abuse of notations, we still denote the following submersion map from $M_0 = I \times P \to N$ by $\pi$, and similarly for each $\pi_i$, then the vertical and horizontal distributions with respect to $\pi : M_0 \to N$ is given by
\begin{align*}
	\mathcal{V} & := \ker\pi_* = \textup{span}\left\{\D{}{s}, \zeta\right\}\\
	\mathcal{H} & := \mathcal{V}^{\perp_g} = E.	
\end{align*}
Moreover, each $F_i$ is the conformal factor of the submersion $\pi_i$ to the fixed metric $(N_i, g_i)$ while $H$ is associated with the length of the circle fiber. By continuous extension to $M$, they can be considered as smooth non-negative functions from $M$ to $\mathbb{R}$ independent of coordinates. \\   

We can define an almost complex structure $J$ on $TM_0$ by, for each $X \in \mathcal{H}$,
\begin{align*}
	J\D{}{s} & = -\frac{1}{H(s)}\zeta, \text{ and }	J\zeta = H(s)\D{}{s}\\
	JX & = \sum_{i=1}^r J_{N_i}\big((\pi_i)_*(X)\big).
\end{align*}
It is immediate that $g$ is Hermitian with respect to $J$ and the K\"{a}hler form $\omega$ of $(M_0, J, g)$ is defined by
\[\omega(\cdot, \cdot) = g(J\cdot, \cdot).\]
By observing that $\eta(X) = 0$ for any $X \in \H$ and that $\pi_*(\zeta) = 0$, we obtain
\[\omega = H\eta \wedge ds + \sum_{i=1}^r F_i^2(s) \pi_i^*\omega_{N_i}.\]
Henceforth, by the fact that $H$ depends only on $s$ so that $dH \wedge ds = 0$, we have
\begin{align*}
	d\omega & = dH \wedge \eta \wedge ds + Hd\eta \wedge ds + \sum_{i=1}^r dF_i^2 \wedge \pi_i^* \omega_{N_i}\\
	& = \sum_{i=1}^r H(s) q_i\pi_i^*\omega_{N_i} \wedge ds + \frac{d}{ds}F_i^2(s) ds \wedge \pi_i^*\omega_{N_i}.
\end{align*}
Therefore, $M_0$ is K\"ahler with respect to $J$ iff
\begin{equation}
	\label{eq:Kahler}
	q_i H(s) = \frac{dF_i^2(s)}{ds} \;\; \forall i = 1, \cdots, r.	
\end{equation}

\subsection{Closing conditions}
\label{subsect:closing}
Suppose that $I$ has the left end-point $s_0$ where \[\lim_{s\rightarrow s_0^+} H(s)=0< \lim_{s\rightarrow s_0^+}F_i(s).\] The completeness of the metric imposes certain conditions on $H$ and each $F_i$ approaching this point. Let $\theta$ be a (local) $\mathbb{S}^1$-fiber coordinate of submersion $\pi : P \to N$, and write $\eta$ locally as $\eta = d\theta + \alpha$ for some $1$-form $\alpha$ independent of $\theta$. By expressing the metric using polar coordinates $x = r\cos\theta$, $y = r\sin\theta$, where $r(s)$ is a function of $s$ obtained by solving the ODE:
\[\frac{1}{r}dr = \frac{1}{H(s)}ds, \quad \text{and} \quad r(s_0) = 0.\]
Then, the metric can be written as
\begin{align*}
	g & = \frac{H(s)^2}{r^2}\left(dr^2 + r^2(d\theta + \alpha) \otimes (d\theta + \alpha)\right) + \sum_j F_j(s)^2 \pi_j^*g_{N_j}\\
	& = \frac{H(s)^2}{x^2+y^2}\left(dx^2 + dy^2 + (x\,dy - y\,dx) \cdot \alpha + (x^2+y^2) \alpha \otimes \alpha\right) + \sum_j F_j(s)^2 \pi_j^*g_{N_j}.
\end{align*}
Let $\bar{H}(r)$ and $\bar{F}_j(r)$ be functions such that
\[\bar{H}(r(s)) = H(s) \text{ and } \bar{F}_j(r(s)) = F_j(s),\]
then in order to extend the metric of $I \times P$ to the left end-point $s_0 \in I$, we require that there exist smooth functions $\phi$ and $\psi_j$ of $r$ defined near $r = 0$ and with $\phi(0) > 0$ and $\psi_j(0) > 0$ such that
\[\frac{\bar{H}(r)}{\sqrt{x^2+y^2}} = \phi(x^2+y^2) = \phi(r^2), \text{ and } \bar{F}_j(r) = \psi_j(x^2+y^2)\]
such that $\phi(0), \psi_j(0) > 0$. By considering Taylor expansion, the above is equivalent to the conditions that
\[H'(s_0)=1, \text{ and } H^{(2k)}(s_0) = F_i^{(2k+1)}(s_0) = 0 \text{ for any $k \geq 0$.}\]
The condition is similar if $s_1$ is the right end-point of $I$, except that the first condition is replaced by $H'(s_1) = -1$.

\subsection{Special case: Calabi symmetry}
When the base manifold $N$ consists of only one factor, i.e. $r = 1$, then for each $s \in I$, the submersion $\pi : (P, g_s) \to (N, F(s)^2g_N)$ is a Riemannian submersion, and the submersion $\pi : (I \times P, g) \to (N, g_N)$ is a conformal submersion.

One important special case of the ansatz \eqref{eq:DW} is given by metrics under Calabi's ansatz. It can be obtained by taking $P = \mathbb{S}^{2n-1}/\mathbb{Z}_k := L^{2n-1}(k)$, commonly called a lens space, with the round metric, and $N = \CP^{n-1}$ with the Fubini-study metric $g_{\FS}$. The submersion $\pi : \mathbb{S}^{2n-1} / \mathbb{Z}_k \to \CP^{n-1}$ is then the canonical projection. Take $I = (0, \infty)$, then $I \times P$ is biholomorphic to $(\C^n \backslash \{0\})/\mathbb{Z}_k$. A metric $g$ defined on $(\C^n \backslash \{0\})/\mathbb{Z}_k$ is said to have \textbf{Calabi symmetry} under the global coordinates $(z_1, \cdots, z_n)$ on $\C^n\backslash\{0\}$ if it can be written in the form
\[g_{i\bar{j}} = g\left(\D{}{z_i}, \D{}{\bar{z}_j}\right) = \frac{\partial^2 u}{\partial z_i \partial \bar{z}_j}\]
where $u$ depends only $\|z\|^2 = \sum_i \abs{z_i}^2$. It is typical that we let $\rho = \log \|z\|^2$, then one can directly compute that $\sqrt{-1}\ddbar\rho = \pi^*\omega_{\FS}$. By the chain rule, one can check that
\[g_{i\bar{j}} = u_{\rho\rho} \D{\rho}{z_i}\D{\rho}{\bar{z}_j} + u_\rho \frac{\partial^2\rho}{\partial z_i \partial \bar{z}_j}.\]
By letting
\begin{align*}
	\eta & = \frac{k\sqrt{-1}}{2}\left(\bar\partial\rho - \partial\rho\right)	= \frac{k\sqrt{-1}}{2}\left(\D{\rho}{\bar{z}_i}d\bar{z}^i - \D{\rho}{z_i}dz^i\right),\\
	\zeta & = \frac{\sqrt{-1}}{k}\left(z_i\D{}{z_i} - \bar{z}_i\D{}{\bar{z}_i}\right),
\end{align*}
then we have $\eta(\zeta) = 1$ since $\D{\rho}{z_i} = \frac{\bar{z}_i}{\norm{z}^2}$, and $d\eta = k\pi^*\omega_{\FS}$ by $\sqrt{-1}\ddbar\rho = \pi^*\omega_{\FS}$. One can then check that
\[g = u_{\rho\rho}\left(d\rho \otimes d\rho + \frac{1}{k^2}\eta \otimes \eta\right) + u_\rho \pi^*g_{\FS}.\]
By the change of variables $s = \psi(\rho)$ via the ODE
\[\frac{d\psi}{d\rho} = \sqrt{u_{\rho\rho}}\]
one can rewrite $g$ in the form of
\[g = ds^2 + \frac{1}{k^2} u_{\rho\rho} \circ \psi^{-1}(s) \eta\otimes\eta + u_\rho \circ \psi^{-1}(s)\pi^*g_{\FS},\]
corresponding to ansatz \eqref{eq:DW} with
\[H(s) = \sqrt{\frac{1}{k^2}u_{\rho\rho} \circ \psi^{-1}(s)} \quad \text{ and } \quad F(s) = \sqrt{u_\rho \circ \psi^{-1}(s)}.\]
It is easy to verify that
\[\frac{d}{ds}F(s)^2 = u_{\rho\rho}\frac{d\rho}{ds} = \sqrt{u_{\rho\rho} \circ \psi^{-1}(s)}= kH(s),\]
which is exactly the K\"ahler condition.

\subsection{O'Neill Tensors}
A submersion $\pi : M \to N$ is often associated the following two important tensors introduced by O'Neill \cite{ONeill66}. For any $X, Y \in TM$, 
\begin{align*}
	A_X Y & := \V\nabla_{\H X}\H Y + \H\nabla_{\H X}\V Y\\
	T_X Y & := \H\nabla_{\V X}\V Y + \V\nabla_{\V X}\H Y
\end{align*}
where $\H$ and $\V$ denotes the horizontal and vertical projections respectively. One can check that under the ansatz \eqref{eq:DW}, we have $T \equiv 0$, i.e. the fibers of $\pi$ are totally geodesic. For the $A$ tensor, we collect the following whose proofs are given in the Appendix for completeness. Let $\sharp$ denote the vector dual to the one-form with respect to $g$. 
\begin{lemma}
	\label{lm1}
	For the ansatz \eqref{eq:DW}, the $A$-tensor is given by
	\begin{align*}
		A_X Y & = \frac{1}{2}\V[X,Y] - \sum_{i=1}^r  F_i^2\pi_i^*g_{N_i}(X,Y)\V\nabla \log F_i \\ 
		& = - \sum_{i=1}^r F_i^2\pi_i^*\omega_{N_i}(X,Y)\V(J\nabla \log F_i) - \sum_{i=1}^r  F_i^2\pi_i^*g_{N_i}(X,Y) \V\nabla \log F_i \nonumber\\
		& \hskip 0.5cm - \frac{1}{2} \V\left(d\omega(X,Y,J(\cdot))\right)^\sharp \nonumber\\
		A_X V & = -\H\left(g\big(V, A_X(\cdot)\big)^\sharp\right)\\
		A_U X & = A_U V = 0 
	\end{align*}
	for any $X, Y \in \H$ and $U, V \in \V$. 
\end{lemma}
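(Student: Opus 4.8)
The plan is to compute $A_X Y = \V\nabla_X Y$ directly for horizontal $X, Y \in \H$ (the second term in the definition drops out since $\V Y = 0$), and to split it into its parts antisymmetric and symmetric in $(X,Y)$. The antisymmetric part is the universal O'Neill contribution $\frac12\V[X,Y]$, coming from $\nabla_X Y - \nabla_Y X = [X,Y]$; because the ansatz is not a \emph{Riemannian} submersion (the horizontal metric scales in $s$ through the $F_i$), the symmetric part need not vanish and is exactly the source of the $\V\nabla\log F_i$ terms. Throughout I would compute using convenient extensions, taking $X, Y$ to be horizontal lifts of vector fields on $N$, hence independent of both $s$ and the fiber coordinate, which is legitimate since $A$ is tensorial.

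For the symmetric part I would apply the Koszul formula to $g(\nabla_X Y, V)$ for $V \in \V$. Since $g(X,V) = g(Y,V) = 0$, the terms surviving after symmetrization in $(X,Y)$ are
\[
2\,g(\nabla_X Y, V)_{\mathrm{sym}} = -V\,g(X,Y) - g([X,V],Y) - g([Y,V],X).
\]
Testing against $V = \dd{s}$: the brackets vanish (the lifts commute with $\dd{s}$), and $\dd{s} g(X,Y) = \sum_i (F_i^2)' \pi_i^* g_{N_i}(X,Y)$ since only the conformal factors depend on $s$; recalling $\V\nabla\log F_i = \frac{F_i'}{F_i}\dd{s}$, this reproduces $-\sum_i F_i^2 \pi_i^* g_{N_i}(X,Y)\,\V\nabla\log F_i$. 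Testing against $V = \zeta$: here $\zeta g(X,Y) = 0$ because $F_i$ depends only on $s$ and $\pi_i^* g_{N_i}(X,Y)$ is pulled back from $N$ (so it is annihilated by the vertical $\zeta$), while $g([X,\zeta],Y) + g([Y,\zeta],X) = (\mathcal{L}_\zeta g)(X,Y) = 0$ because $\zeta$ generates the fiber circle action by isometries. Hence the symmetric part has no $\zeta$-component, and combining with the antisymmetric part gives the first displayed formula.

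To pass to the second (closed, $J$-invariant) formula I would rewrite the antisymmetric part. Since the lifts are $s$-independent and tangent to $P$, one has $ds([X,Y]) = 0$, so $\V[X,Y] = \eta([X,Y])\,\zeta = -\,d\eta(X,Y)\,\zeta = -\sum_i q_i\,\pi_i^*\omega_{N_i}(X,Y)\,\zeta$. I would then check that the right-hand side of the second formula collapses to the same expression: using $J\dd{s} = -\tfrac1H\zeta$ one gets $\V(J\nabla\log F_i) = -\frac{F_i'}{F_i H}\zeta$, and using the computed $d\omega$ together with $ds(J\,\cdot) = H\eta(\cdot)$ and $\eta^\sharp = H^{-2}\zeta$ one evaluates $-\tfrac12\V\big(d\omega(X,Y,J(\cdot))\big)^\sharp$; the $(F_i^2)'$-contributions from these two terms cancel and leave precisely the $q_i$-multiple of $\zeta$, matching $\tfrac12\V[X,Y]$. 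The identity $A_X V = -\H\big(g(V, A_X(\cdot))^\sharp\big)$ then follows from the skew-adjointness of $A_X$, namely $g(A_X V, Y) = -g(V, A_X Y)$ for $Y\in\H$, which is the general O'Neill relation obtained by differentiating $g(Y,V)=0$ along $X$; and $A_U X = A_U V = 0$ is immediate from the definition since $\H U = 0$ for vertical $U$.

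The routine but error-prone part is the sign and convention bookkeeping in the second formula: every term is a multiple of $\zeta$, and one must track the signs entering through $J$, the orientation of $\omega$, and the relation $\eta([X,Y]) = -d\eta(X,Y)$ consistently. The one genuinely structural input, rather than bookkeeping, is the vanishing of the $\zeta$-direction symmetric part, i.e. that $\zeta$ acts by isometries so that $\mathcal{L}_\zeta g = 0$; this is what forces the symmetric correction to be purely radial (a multiple of $\dd{s}$) and hence expressible through $\V\nabla\log F_i$ alone.
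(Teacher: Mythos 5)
Your proposal is correct and its core is the same as the paper's: the Koszul formula produces the symmetric correction $-\sum_i F_i^2\pi_i^*g_{N_i}(X,Y)\V\nabla\log F_i$ alongside $\tfrac12\V[X,Y]$, the identity for $A_XV$ is the skew-adjointness $g(A_XV,Y)=-g(V,A_XY)$ obtained by differentiating $g(Y,V)=0$, and $A_U=0$ is definitional. The organizational differences are worth noting. For the first identity you split into symmetric/antisymmetric parts and then treat $V=\dd{s}$ and $V=\zeta$ by two separate mechanisms (commuting lifts, and $\mathcal{L}_\zeta g=0$); the paper instead makes the single observation that $[X,V]\in\V$ for any basic horizontal $X$ and any $V\in\ker\pi_*$, which kills both bracket terms in Koszul at once and renders the Killing property of $\zeta$ unnecessary for this step — a lighter hypothesis doing the same work. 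For the second identity your route is to compute both sides explicitly as multiples of $\zeta$, using $\V[X,Y]=-d\eta(X,Y)\,\zeta$, and check that the $(F_i^2)'$-contributions cancel against the $d\omega$-term; the paper instead derives the relation
\[
\V[X,Y]=-2\sum_i F_i^2\pi_i^*\omega_{N_i}(X,Y)\,\V J\nabla\log F_i-\V\big(d\omega(X,Y,J(\cdot))\big)^\sharp
\]
invariantly, by expanding $d\omega(X,Y,JV)$ with the Cartan formula and using only $\omega(\cdot,\cdot)=g(J\cdot,\cdot)$. This is more than a stylistic choice: the invariant derivation sidesteps exactly the wedge-product and $J$-sign conventions that you correctly flag as the error-prone part of your coefficient check (and where a slip would make the two $(F_i^2)'$-terms add rather than cancel), and the resulting identity \eqref{eq:V[X,Y]} is quoted verbatim later in the curvature computation, so the paper needs it as a standalone formula rather than as a collapsed verification. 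Your argument is valid as written provided the explicit bookkeeping is carried out consistently, but the paper's derivation of the second line is the more robust and reusable one.
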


\begin{proof}
See Appendix.	
\end{proof}

\begin{corollary}
	\label{cor:|A|^2<|grad u|^2}
	There exists a constant such that
	\begin{equation*}
		\abs{A}_g^2 \leq C\left(\sum_{i=1}^r \abs{\nabla\log F_i}_g^2 + \abs{d\omega}_g^2\right).
	\end{equation*}
	
\end{corollary}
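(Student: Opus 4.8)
The plan is to read off the estimate directly from Lemma \ref{lm1}, since every component of $A$ is there written explicitly in terms of the vertical and horizontal projections of $\nabla\log F_i$ together with a $d\omega$-term. First I would fix a point $p$, choose a $g$-orthonormal frame $\{e_a\}$ of $\H_p$ and a $g$-orthonormal frame $\{f_\beta\}$ of $\V_p$, and write
\[
\abs{A}_g^2 = \sum_{a,b} g(A_{e_a}e_b, A_{e_a}e_b) + \sum_{a,\beta} g(A_{e_a}f_\beta, A_{e_a}f_\beta),
\]
the vertical-first contributions being absent because $A_U X = A_U V = 0$ by Lemma \ref{lm1}. The two surviving sums are equal: the O'Neill relation $g(A_X Y, V) = -g(A_X V, Y)$ (equivalently the formula $A_X V = -\H(g(V,A_X(\cdot))^\sharp)$ of Lemma \ref{lm1}) says that, for each fixed $a$, the operator $A_{e_a}\colon \V_p \to \H_p$ is the negative transpose of $A_{e_a}\colon \H_p \to \V_p$, so the two blocks have the same Frobenius norm. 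It therefore suffices to bound $\sum_{a,b} \abs{A_{e_a}e_b}_g^2$ and double it.

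Next I would invoke the three-term expression for $A_X Y$ in Lemma \ref{lm1} and split it by the triangle inequality, using $(\alpha+\beta+\gamma)^2 \le 3(\alpha^2+\beta^2+\gamma^2)$, into the $\pi_i^*\omega_{N_i}$-piece, the $\pi_i^*g_{N_i}$-piece, and the $d\omega$-piece. For the first two, the key observation is that on $\H$ the metric splits orthogonally as $g|_{\H} = \sum_i F_i^2\pi_i^*g_{N_i}$, so the $g$-orthonormal horizontal frame is obtained, factor by factor, by rescaling a $g_{N_i}$-orthonormal lifted frame by $F_i^{-1}$. This rescaling exactly cancels the prefactor $F_i^2$: one gets $\sum_{a,b}\big(F_i^2\pi_i^*g_{N_i}(e_a,e_b)\big)^2 \le \dim N_i$ and $\sum_{a,b}\big(F_i^2\pi_i^*\omega_{N_i}(e_a,e_b)\big)^2 = \abs{\omega_{N_i}}_{g_{N_i}}^2 = n_i$, both independent of the (possibly degenerating) $F_i$. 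Since $J$ is a $g$-isometry and the projection $\V$ is norm-nonincreasing, the vertical vectors satisfy $\abs{\V(J\nabla\log F_i)}_g \le \abs{\nabla\log F_i}_g$ and $\abs{\V\nabla\log F_i}_g \le \abs{\nabla\log F_i}_g$. Summing over $a,b$ then bounds each of these two pieces by a dimensional constant times $\sum_i \abs{\nabla\log F_i}_g^2$.

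For the third piece, $\tfrac12\V\big(d\omega(e_a,e_b,J(\cdot))\big)^\sharp$, I would again use that $J$, the musical isomorphism $\sharp$, and the vertical projection are all norm-nonincreasing isometries, so that $\sum_{a,b}\abs{\V(d\omega(e_a,e_b,J(\cdot)))^\sharp}_g^2$ is controlled by $\sum_{a,b,c}\big(d\omega(e_a,e_b,Je_c)\big)^2 \le \abs{d\omega}_g^2$ up to a dimensional constant coming from the fact that $J$ merely permutes and signs the frame. Combining the three bounds and doubling gives the claimed inequality with a constant $C$ depending only on the dimensions $n_1,\dots,n_r$. I expect the only genuinely delicate point to be the cancellation in the previous paragraph: one must verify that summing the symbol coefficients $F_i^2\pi_i^*g_{N_i}(e_a,e_b)$ over a $g$-orthonormal frame yields a constant that does \emph{not} blow up as the conformal factors $F_i$ degenerate, and this is precisely what the orthogonal factorwise splitting of $g|_{\H}$ guarantees.
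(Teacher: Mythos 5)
Your argument is correct and follows essentially the same route as the paper's: both proofs read the bound directly off Lemma \ref{lm1}, with the key point being that the coefficients $F_i^2\pi_i^*g_{N_i}(e_a,e_b)$ and $F_i^2\pi_i^*\omega_{N_i}(e_a,e_b)$ in a $g$-orthonormal horizontal frame are bounded by a dimensional constant independent of the (possibly degenerating) factors $F_i$, precisely because $g|_{\H}=\sum_i F_i^2\pi_i^*g_{N_i}$. The paper's version is terser (it only records the coefficient bound and cites Lemma \ref{lm1}); your additional care with the $A_XV$ block via the transpose relation and with the $d\omega$-term is consistent with, and fills in, what the paper leaves implicit.
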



\subsection{Computation of Curvature}
Next, we will compute the curvature tensors via Gauss, Codazzi, and Riccati equations and the second fundamental form. Regard each $P_s := \{s\} \times P$ as a hypersurface of $M$, we consider the shape operator $L_s$, as in \cite[Remark 2.18]{DW11coho}, defined by
\[L_s(X):= \nabla_X \nu \;\; \text{for any } X \in TP_s\]
where $\nu:=\frac{\partial}{\partial s}$ and $\nabla$ is the Levi-Civita connection on $M$. We will denote it simply by $L$ if it is clear from the context. For each $s$, $L_s$ can be diagonalized with eigenvalues $\frac{H'}{H}$ (multiplicity one) and $\frac{F_i'}{F_i}$ (multiplicity $2n_i$). We denote
\[L'_s = \nabla_{\nu}L_s.\]
Consequently, the eigenvalues of $L_s$ and $L'_s$ are given by 
\begin{align*}
	L_s & : \frac{H'}{H}, \underbrace{\frac{F_i'}{F_i}}_{\text{multiplicity $2n_i$}}\\
	L'_s & : \frac{H''}{H}-\left(\frac{H'}{H}\right)^2, \underbrace{\frac{F_i''}{F_i}-\left(\frac{F_i'}{F_i}\right)^2}_{\text{multiplicity $2n_i$}}
\end{align*}
Then, for the trace $\tr$ is taken with respect to $g_s$, we have
\begin{align*}
	\tr{L_s} &=\frac{H'}{H}+\sum_i 2n_i\frac{F'_i}{F_i},\\
	\tr{L^2_s} &=\left(\frac{H'}{H}\right)^2+ \sum_i 2n_i\left(\frac{F'_i}{F_i}\right)^2,\\
	\tr{L'_s} &=\frac{H''}{H}-\left(\frac{H'}{H}\right)^2+\sum_i 2n_i\left(\frac{F''_i}{F_i}-\left(\frac{F'_i}{F_i}\right)^2\right).
\end{align*}

By the Gauss, Codazzi, and Riccati equations, the Ricci curvature $\Ric$ of $(M, g)$ is totally determined by that of $(P, g_s)$ and the shape operator $L_s$. Precisely, for tangential vectors $X$ and $Y$ in $TP_s$, we have
\begin{align}
	\Ric(X,Y) &=\widetilde{\Ric}_s(X,Y)-\tr(L_s)g_s(L X, Y)-g_s({L'}(X),Y), \nonumber\\
	\label{Rccomp}
	\Ric(X, \nu) &=-\nabla_X \tr(L_s)-g_s(\delta L, X),\\ 
	\Ric(\nu, \nu) &= -\text{tr}({L'})-\text{tr}(L^2).\nonumber
\end{align}
Here $\widetilde{\Ric}_s$ denotes the Ricci curvature of $(P, g_s)$, which we may abbreviate by $\widetilde{\Ric}$. By observing that the eigenvalues of $L_s$ depends only on $s$, we have $\Ric(u,\nu) = 0$ for any $u \in TP_s$. From the expressions of $\tr L^2$ and $\tr L'$, we can easily see that
\[\Ric(\nu,\nu)  = -\frac{H''}{H} - \sum_i 2n_i\frac{F''_i}{F_i}.\]
To compute the $\Ric(u,w)$-components, we need to first find out the Ricci curvature $\widetilde{\Ric}$ for $(P, g_s)$. Recall that $(P, g_s)$ is a Riemannian submersion over the product manifold $(N=N_1 \times \cdots \times N_r, g_N=\sum_i F_i^2 g_{N_i})$, and each fiber is the trajectory of a Killing vector field $\zeta$ which is the dual of $\eta$.  

The following calculation is an application of the submersion toolkit restricted to  $\pi : P_s \to N$. Denote $\tilde{A}$ be the corresponding O'Neill's $A$-tensor with respect to this setup. Denote $\eta^\ast:=H\eta$ and $\zeta^\ast$ be the dual vector field of $\eta^\ast$. Then the vertical and horizontal distributions in $TP_s$ with respect to this submersion are given by $\tilde{\V} = \text{span}\{\zeta^*\}$ and $\tilde{\H} := \tilde{\V}^{\perp} = TN_1 \oplus \cdots \oplus TN_r$.
\begin{lemma}
	\label{computeAmul}
	For any $Y \in \Gamma^\infty(\tilde{\H})$ and $X_i \in \pi^\ast(TN_i)$ 
	we have
	\begin{align*}
		\tilde{A}_{X_i} Y & = -\frac{q_iH}{2F_i^2}g_s(JX_i, Y)\zeta^*\\
		\tilde{A}_{X_i}\zeta^* & = \frac{q_iH}{2F_i^2}JX_i\\
	\end{align*}
\end{lemma}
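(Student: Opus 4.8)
The plan is to reduce everything to the fundamental O'Neill identity $\tilde{A}_X Y = \frac{1}{2}\tilde{\V}[X,Y]$, valid for horizontal vector fields $X, Y \in \Gamma^\infty(\tilde{\H})$ (this holds for any Riemannian submersion, here $\pi : P_s \to N$; we compute the tensor $\tilde{A}$ using arbitrary horizontal extensions of $X_i$ and $Y$). Since $\zeta^*$ is the unit vertical field whose metric dual is $\eta^* = H\eta$, the vertical projection of an arbitrary vector $W$ is $\tilde{\V}W = g_s(W,\zeta^*)\zeta^* = H\eta(W)\,\zeta^*$, where the factor of $H$ comes from $g_s(\cdot,\zeta^*) = \eta^*(\cdot)$ together with $g_s(\zeta^*,\zeta^*) = 1$. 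Hence the entire computation of $\tilde{A}_{X_i}Y$ collapses to evaluating the scalar $\eta([X_i, Y])$.

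First I would use that $\tilde{\H} = \ker\eta$, so $\eta(X_i) = \eta(Y) = 0$, and the invariant formula $d\eta(X_i, Y) = X_i\eta(Y) - Y\eta(X_i) - \eta([X_i, Y])$ reduces to $\eta([X_i, Y]) = -d\eta(X_i, Y)$. Substituting the structure equation $d\eta = \sum_{j} q_j\,\pi_j^*\omega_{N_j}$ and using that $X_i \in \pi^*(TN_i)$ is annihilated by $(\pi_j)_*$ for $j \ne i$, only the $j = i$ term survives, giving $\eta([X_i, Y]) = -q_i\,\pi_i^*\omega_{N_i}(X_i, Y)$.

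Next I would convert the symplectic pairing into a metric one. From the definition of $J$ on $\mathcal{H}$, the identity $\omega_{N_i}(\cdot,\cdot) = g_{N_i}(J_{N_i}\cdot,\cdot)$, and the horizontal metric $g_s = \sum_j F_j^2\,\pi_j^*g_{N_j}$, the fact that $JX_i$ is again supported on the $i$-th factor yields $g_s(JX_i, Y) = F_i^2\,\pi_i^*\omega_{N_i}(X_i, Y)$. Chaining the three reductions produces $\tilde{A}_{X_i}Y = \frac{H}{2}\eta([X_i,Y])\,\zeta^* = -\frac{q_iH}{2F_i^2}g_s(JX_i, Y)\,\zeta^*$, which is the first formula.

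The second formula then follows purely formally from the skew-adjointness of the O'Neill operator, $g_s(\tilde{A}_{X_i}\zeta^*, Y) = -g_s(\zeta^*, \tilde{A}_{X_i}Y)$ for horizontal $Y$: inserting the first formula and $g_s(\zeta^*,\zeta^*)=1$ gives $g_s(\tilde{A}_{X_i}\zeta^*, Y) = \frac{q_iH}{2F_i^2}g_s(JX_i, Y)$ for all $Y \in \tilde{\H}$, and since $\tilde{A}_{X_i}\zeta^*$ is horizontal while $JX_i \in \tilde{\H}$, this forces $\tilde{A}_{X_i}\zeta^* = \frac{q_iH}{2F_i^2}JX_i$. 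The computation is essentially bookkeeping; the points requiring care are the normalization constant hidden in $\zeta^* = \zeta/H$ (which contributes the factor $H$) and the systematic use of the product structure to isolate the $i$-th factor in each $\pi_j^*\omega_{N_j}$ and $\pi_j^*g_{N_j}$ pairing. I expect no genuine obstacle beyond this, since the totally geodesic nature of the fibers ($T \equiv 0$, already noted) guarantees the clean O'Neill identity used throughout.
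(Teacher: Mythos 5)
Your proposal is correct and follows essentially the same route as the paper: both reduce to the O'Neill identity $\tilde{A}_{X}Y = \tfrac{1}{2}\tilde{\V}[X,Y]$, evaluate $\eta([X_i,Y])$ via the structure equation $d\eta = \sum_j q_j\pi_j^*\omega_{N_j}$ together with the vanishing of $\eta$ on horizontal fields, convert the symplectic pairing to $g_s(JX_i,Y)/F_i^2$, and obtain the second identity from skew-adjointness of $\tilde{A}_{X_i}$. The only cosmetic difference is that the paper computes with $\eta^* = H\eta$ directly, whereas you work with $\eta$ and reinstate the factor $H$ through the vertical projection $\tilde{\V}W = H\eta(W)\zeta^*$.
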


\begin{proof}
See Appendix.	
\end{proof}

\begin{proposition}
	\label{Rcdeformed1} The sectional curvature $\tilde{K}$ and the Ricci curvature $\widetilde{\Ric}$ of $(P, g_s)$ are given as follows. For any horizontal vectors $X_i, Y_i, Z_i\in TN_i$ such that $\{X_i, Z_i\}$ are linearly independent:
	\begin{align*}
		\tilde{K}(X_i, Z_i) &= K_{N_i}(X_i,Z_i)-\frac{3q_i^2 H^2}{4F_i^4} g_s (JX_i,  Z_i)^2,\\
		\tilde{K}(X_i, \zeta^\ast) &= \frac{q_i^2 H^2}{4F_i^4}\abs{X_i}_{g_s}^2,\\
		\widetilde{\Ric}(X_i, Y_i) &= \left(\frac{k_i}{F_i^2}-\frac{q_i^2H^2}{2F_i^4}\right)g_s(X_i, Y_i)\\\
		\widetilde{\Ric}(\zeta^\ast, \zeta^\ast) &=\sum_i n_i \frac{q_i^2 H^2}{2F_i^4}.
	\end{align*}
	Here $K_{N_i}$ and $\Ric_{N_i}$ are the sectional and Ricci curvature of $(N_i, g_{N_i})$.  The other cross terms of the Ricci tensor are all vanishing. 
\end{proposition}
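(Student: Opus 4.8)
The plan is to apply O'Neill's curvature formulas \cite{ONeill66} to the Riemannian submersion $\pi : (P, g_s) \to (N, g_N)$ with induced base metric $g_N = \sum_i F_i^2 g_{N_i}$. Two structural facts make this efficient: the fibers are totally geodesic, so the O'Neill $T$-tensor of this submersion vanishes (the same computation that gives $T\equiv 0$ for the ambient submersion applies here), and the $A$-tensor is computed explicitly in Lemma \ref{computeAmul}. Consequently every curvature quantity of $(P,g_s)$ reduces to the curvature of the product base $(N,g_N)$ together with algebraic expressions in $\tilde{A}$. A preliminary normalization I would record is that $\zeta^\ast = \frac{1}{H}\zeta$ is a \emph{unit} vertical field, since $\abs{\zeta}_{g_s} = H$; this fixes the scaling of the vertical direction throughout. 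Here I read $\tilde{K}(X,Y)$ as the curvature component $g_s(R(X,Y)Y,X)$, which agrees with the sectional curvature on $g_s$-orthonormal pairs.

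For the sectional curvatures I would split into two cases. For two horizontal vectors $X_i, Z_i \in TN_i$, O'Neill's equation reads $\tilde{K}(X_i, Z_i) = K^{\textup{base}}(X_i, Z_i) - 3\abs{\tilde{A}_{X_i}Z_i}_{g_s}^2$. Since the base is a Riemannian product, the curvature of a plane lying in a single factor is that of $(N_i, F_i^2 g_{N_i})$, denoted $K_{N_i}(X_i,Z_i)$ in the statement; substituting $\tilde{A}_{X_i}Z_i = -\frac{q_iH}{2F_i^2}g_s(JX_i, Z_i)\zeta^\ast$ and $\abs{\zeta^\ast}_{g_s}=1$ produces the claimed $-\frac{3q_i^2H^2}{4F_i^4}g_s(JX_i, Z_i)^2$ correction. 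For a horizontal $X_i$ and the unit vertical $\zeta^\ast$, the $T\equiv 0$ version of O'Neill's mixed equation gives $\tilde{K}(X_i, \zeta^\ast) = \abs{\tilde{A}_{X_i}\zeta^\ast}_{g_s}^2 = \frac{q_i^2H^2}{4F_i^4}\abs{X_i}_{g_s}^2$, using $\tilde{A}_{X_i}\zeta^\ast = \frac{q_iH}{2F_i^2}JX_i$ and the fact that $J$ is a $g_s$-isometry on each factor.

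The Ricci curvatures then follow by tracing over a $g_s$-orthonormal frame adapted to the splitting $\tilde{\V}\oplus \bigoplus_i TN_i$. For $\widetilde{\Ric}(\zeta^\ast,\zeta^\ast)$ I would sum $\tilde{K}(e_a,\zeta^\ast)$ over a horizontal orthonormal basis $\{e_a\}$: each of the $2n_i$ basis vectors in the $N_i$-factor contributes $\frac{q_i^2H^2}{4F_i^4}$, giving $\sum_i n_i\frac{q_i^2H^2}{2F_i^4}$. For a $g_s$-unit horizontal $X_i\in TN_i$, I would write $\widetilde{\Ric}(X_i,X_i) = \tilde{K}(X_i,\zeta^\ast) + \sum_a \tilde{K}(X_i,e_a)$, and split the horizontal sum into a base part and an $A$-part. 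The base part is $\Ric_{(N,g_N)}(X_i,X_i) = \frac{k_i}{F_i^2}$, because the Ricci $(0,2)$-tensor is invariant under constant rescaling, $g_{N_i}$ is Einstein with $\Ric_{N_i}=k_ig_{N_i}$, and $g_{N_i}(X_i,X_i)=F_i^{-2}$ for a $g_s$-unit $X_i$; the $A$-part is $-3\sum_a\abs{\tilde{A}_{X_i}e_a}_{g_s}^2 = -\frac{3q_i^2H^2}{4F_i^4}$ since $\sum_a g_s(JX_i,e_a)^2 = \abs{JX_i}_{g_s}^2 = 1$. Adding the vertical term $\frac{q_i^2H^2}{4F_i^4}$ collapses everything to $\frac{k_i}{F_i^2} - \frac{q_i^2H^2}{2F_i^4}$, which polarizes to the stated bilinear form.

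Finally I would verify that the remaining off-diagonal components vanish. The terms $\widetilde{\Ric}(X_i,Y_j)$ with $i\neq j$ vanish because the product base has no mixed Ricci across factors and because $JX_i\in TN_i$ is $g_s$-orthogonal to every vector in $TN_j$, killing the $A$-contribution. The genuinely delicate term is the mixed component $\widetilde{\Ric}(X_i,\zeta^\ast)$, which is \emph{not} a sum of sectional curvatures: O'Neill's formula expresses it (up to sign) through the codifferential $\delta\tilde{A}$ of the $A$-tensor, i.e. a horizontal divergence $\sum_a (\nabla_{e_a}\tilde{A})_{e_a}X_i$. I expect this to be the main obstacle, and I would resolve it by exploiting the invariance of $H$, the $F_i$, and the structure form $d\eta = \sum_i q_i\pi_i^\ast\omega_{N_i}$ under the fiber $U(1)$-action generated by $\zeta$: the divergence is assembled from $\zeta$-derivatives of these invariant data and from the parallelism of $J$ on each Kähler factor, and therefore cancels. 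This is precisely where the rigidity of the ansatz, rather than generic submersion theory, is needed.
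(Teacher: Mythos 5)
Your proposal is correct and follows essentially the same route as the paper: both arguments invoke O'Neill's submersion curvature formulas for $\pi : (P,g_s) \to (N, \sum_i F_i^2 g_{N_i})$ with totally geodesic fibers ($T \equiv 0$) and substitute the explicit $\tilde{A}$-tensor from Lemma \ref{computeAmul}, the paper citing the sectional and Ricci identities from Boyer--Galicki (Cor.~2.5.17 and Thm.~2.5.18) where you instead recover the Ricci components by tracing the sectional formulas over an adapted orthonormal frame. Your additional remark on why $\widetilde{\Ric}(X_i,\zeta^\ast)$ vanishes via the divergence of $\tilde{A}$ is a point the paper asserts without proof, so your treatment is, if anything, slightly more complete.
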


\begin{proof}
See Appendix.	
\end{proof}

Combining \eqref{Rccomp}, Proposition \ref{Rcdeformed1} and the eigenvalues of $L$ and $L'$, we can determine the Ricci curvature $\Ric$ of $M$ when restricted to $X_i, Y_i \in TN_i$. 

To summarize, we have proved that:

\begin{proposition}
	\label{prop:Ricci_Riemannian}
	Under the ansatz \eqref{eq:DW} defined in Section \ref{sect:DW}, the Ricci curvature $(M,g)$ is given as follows: for any $X_i, Y_i \in TN_i$, we have
	\begin{align*}
		& \Ric(X_i,Y_i)\\
		& = \widetilde{\Ric}_s(X_i,Y_i) - \left(\frac{F_i'}{F_i}\left(\frac{H'}{H}+\sum_j 2n_j\frac{F'_j}{F_j}\right) + \frac{F_i''}{F_i}-\left(\frac{F_i'}{F_i}\right)^2\right)g_s(X_i,Y_i) \nonumber\\
		& = \left(\frac{k_i}{F_i^2} - \frac{F_i'}{F_i}\left(\frac{H'}{H}+\sum_j 2n_j\frac{F'_j}{F_j}\right) - \frac{F_i''}{F_i}+\left(\frac{F_i'}{F_i}\right)^2 - \frac{q_i^2H^2}{2F_i^4}\right)F_i^2 g_{N_i}(X_i,Y_i) \nonumber
	\end{align*}
	Other components of the Ricci curvature are given by
	\begin{align*}
		\Ric(\zeta^*,\zeta^*) & = \sum_i n_i\frac{q_i^2 H^2}{2F_i^4} - \frac{H'}{H}\left(\frac{H'}{H}+\sum_i 2n_i\frac{F'_i}{F_i}\right) - \frac{H''}{H} + \left(\frac{H'}{H}\right)^2\\
		& = \sum_i n_i \frac{q_i^2 H^2}{2F_i^4} - \frac{H'}{H}\sum_i 2n_i\frac{F'_i}{F_i} - \frac{H''}{H}  \nonumber \\
		\Ric(\nu, \nu) &= -\frac{H''}{H} - \sum_i 2n_i\frac{F''_i}{F_i}\\
		\Ric(X_i,\zeta^*) & = \Ric(X_i, \nu) = \Ric(\nu,\zeta^*) = 0.
	\end{align*}
\end{proposition}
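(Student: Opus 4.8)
The plan is to obtain every component of $\Ric$ by feeding into one another the three ingredients already assembled above: the Gauss--Codazzi--Riccati identities \eqref{Rccomp}, the intrinsic Ricci curvature of the slice $(P,g_s)$ from Proposition \ref{Rcdeformed1}, and the explicit spectrum of the shape operator $L_s$ together with $L'_s = \nabla_\nu L_s$. The structural observation that makes this purely mechanical is that $L_s$ (and hence $L'_s$) is \emph{simultaneously diagonalized} with respect to the $g_s$-orthogonal splitting $TP_s = \tilde{\V} \oplus \bigoplus_i \pi_i^*(TN_i)$, with $\zeta^*$ spanning the eigenspace of $H'/H$ (multiplicity one) and each $TN_i$-block being the eigenspace of $F_i'/F_i$. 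Thus it suffices to evaluate \eqref{Rccomp} on these eigenvectors and substitute.

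First I would treat the tangential block $\Ric(X_i, Y_i)$. Taking $X_i, Y_i$ to be horizontal lifts of vectors in $TN_i$, they are eigenvectors of $L_s$ with eigenvalue $F_i'/F_i$ and of $L'_s$ with eigenvalue $F_i''/F_i - (F_i'/F_i)^2$; plugging this and $\tr L_s = H'/H + \sum_j 2n_j F_j'/F_j$ into the first line of \eqref{Rccomp} yields the first displayed line of the proposition. Substituting $\widetilde{\Ric}_s(X_i, Y_i) = \big(k_i/F_i^2 - q_i^2 H^2/2F_i^4\big)g_s(X_i,Y_i)$ from Proposition \ref{Rcdeformed1} and rewriting $g_s(X_i, Y_i) = F_i^2 g_{N_i}(X_i, Y_i)$ then gives the second line. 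The same evaluation applied to the unit vertical field $\zeta^*$, together with $\widetilde{\Ric}(\zeta^*, \zeta^*) = \sum_i n_i q_i^2 H^2/2F_i^4$, produces the first form of $\Ric(\zeta^*,\zeta^*)$, after which the $\pm(H'/H)^2$ terms cancel to give the simplified second form. For $\Ric(\nu,\nu) = -\tr L' - \tr L^2$ one only inserts the trace formulas recorded before the proposition.

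Finally I would dispose of the off-diagonal components. The terms $\Ric(X_i,\nu)$ and $\Ric(\nu, \zeta^*)$ vanish by the remark already made in the text: since the eigenvalues of $L_s$ depend only on $s$, $\tr L_s$ is constant along each slice and the Codazzi term in \eqref{Rccomp} drops out. The terms $\Ric(X_i, \zeta^*)$, and the cross-factor terms $\Ric(X_i, Y_j)$ for $i \neq j$, vanish because $\widetilde{\Ric}_s$ has no such cross terms (Proposition \ref{Rcdeformed1}) while the diagonal operators $L_s, L'_s$ map each eigenspace into itself and the eigenspaces are mutually $g_s$-orthogonal. I expect no genuine obstacle here: the substantive content lives in Proposition \ref{Rcdeformed1} (through the O'Neill tensor computation of Lemma \ref{computeAmul}) and in the Gauss--Codazzi--Riccati setup, both already in hand. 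The only points demanding care are purely bookkeeping --- that $\zeta^*$ is \emph{unit}, so that it contributes the $H'/H$ eigenvalue with multiplicity one, and that the conformal factor $F_i^2$ relating $g_s$ to $g_{N_i}$ is tracked consistently when converting between $g_s(X_i,Y_i)$ and $g_{N_i}(X_i,Y_i)$.
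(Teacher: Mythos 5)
Your proposal is correct and follows exactly the paper's route: the text preceding the proposition states that the result follows by "combining \eqref{Rccomp}, Proposition \ref{Rcdeformed1} and the eigenvalues of $L$ and $L'$," which is precisely the evaluation of the Gauss--Codazzi--Riccati identities on the simultaneous eigenbasis $\{\zeta^*\}\cup\bigoplus_i \pi_i^*(TN_i)$ that you carry out. Your bookkeeping (unit $\zeta^*$, conformal factor $F_i^2$, cancellation of $(H'/H)^2$, and the vanishing of the cross terms) matches the paper's computation.
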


\begin{proof}
See Appendix.	
\end{proof}

Next, we'll observe that the K\"{a}hler condition will simplify these expression. For any scalar function $u(s)$ depending only on $s$, its norm of gradient and the Laplacian are given as follows:
\begin{align*}
	\abs{\nabla u}^2 & = u'(s)^2\\
	\Delta u 
	& = u'' + \left(\frac{H'}{H} + \sum_{i=1}^r 2n_i \frac{F_i'}{F_i}\right) u'.
\end{align*}
Recall from \eqref{eq:Kahler} that $(M,g)$ is K\"ahler if and only if for each $i$
\[q_i H(s) = 2F_iF_i'(s).\]
Then one observes
\begin{align*}
	\abs{\nabla \log F_i}^2 & = \left(\frac{F_i'}{F_i}\right)^2 = \frac{q_i^2 H^2}{4F_i^4},\\
	\frac{H'}{H} & = \frac{F_i'}{F_i} + \frac{F_i''}{F_i'},\\
	\Delta \log H & = \frac{\Delta H}{H} - \frac{\abs{\nabla H}^2}{H^2} = \frac{H''}{H} + \frac{H'}{H}\left(\tr L - \frac{H'}{H}\right)\\
	& = \frac{H''}{H} + \frac{H'}{H}\sum_i 2n_i \frac{F_i'}{F_i},\\
	\Delta \log F_i & = \frac{F_i''}{F_i} + \frac{F_i'}{F_i}\left(\tr L - \frac{F_i'}{F_i}\right).
\end{align*}
Consequently, we have
\begin{equation*}
	-\Delta\log H + \sum_i 2n_i \abs{\nabla\log F_i}^2 = -\frac{H''}{H} - \sum_i 2n_i \frac{F_i''}{F_i}
\end{equation*}
which is exactly the $\Ric(\nu,\nu)$ component in Proposition \ref{prop:Ricci_Riemannian}. Thus, the Ricci tensor in Proposition \ref{prop:Ricci_Riemannian} can be simplified to, for where $X_i, Y_i \in \pi^\ast(TN_i)$.
\begin{align}
	\label{RcKah1}
	\Ric(\nu,\nu) & =\Ric(\zeta^*,\zeta^*) = -\Delta \log H + \sum_i 2n_i \abs{\nabla\log F_i}^2\\
	\Ric(X_i, Y_i) & = \left(\frac{k_i}{F_i^2} - \frac{F_i'}{F_i}\cdot \tr L - \frac{F_i''}{F_i}+\left(\frac{F_i'}{F_i}\right)^2 - 2\abs{\nabla\log F_i}^2\right)g(X_i,Y_i)\nonumber\\
	\label{RcKah2}
	& = \left(\frac{k_i}{F_i^2} - \Delta \log F_i - 2\abs{\nabla\log F_i}^2\right)F_i^2 g_{N_i}(X_i,Y_i)\\
	& = \left(k_i - \frac{1}{2}\Delta F_i^2\right)g_{N_i}(X_i,Y_i). \nonumber
\end{align}

\section{Reduction of Ricci flow to ODEs}

Proposition \ref{prop:Ricci_Riemannian} shows that, for the ansatz described in Section \ref{sect:DW}, the Ricci curvature tensor is of the same form as the metric:
	\[\Ric(g) = \alpha(s)\,ds^2 + \beta(s)\,\eta \otimes \eta + \sum_{i=1}^r \gamma_i(s) \pi_i^* g_{N_i}\]
	for scalar functions $\alpha$, $\beta$ and $\gamma_i$ of $s$. This suggests that the ansatz is preserved along the Ricci flow and this section is devoted to prove that statement. 

To this end, we consider an evolving family of metrics of the form:
\[g(\sigma, t) = a(\sigma, t)^2\,d\sigma^2 + h(\sigma,t)^2\eta \otimes \eta + \sum_{i=1}^r f_i(\sigma,t)^2\pi_i^*g_{N_i},\]
where $\sigma$ is the spatial parameter independent of $t$. This form of metric is related to the ansatz \eqref{eq:DW} under the change of variables:
\begin{align*}
	ds & =a(\sigma,t)\,d\sigma, & \sigma(s_0) & = 0\\
	H(s,t) & = h(\sigma(s),t), & F_i(s,t) & = f_i(\sigma(s),t)	
\end{align*}
Indeed, $F_i$ and $f_i$ and, similarly, $H$ and $h$ are just representation of the same functions with respect to different coordinates. By the chain rule, the derivatives of $H$ and $h$ are related by
\begin{align*}
	H_s & = \frac{h_\sigma}{a}\\
	H_{ss} & = \frac{h_{\sigma\sigma}}{a^2} - \frac{h_\sigma a_\sigma}{a^3}
\end{align*}
and there are similar identities for $F_i$ and $f_i$. With this change of variables, the Ricci curvature is given by, while all other cross-terms vanish,
\begin{align*}
	\Ric(X_i,Y_i) & = \left\{\frac{k_i}{f_i^2} - \frac{(f_i)_\sigma}{a f_i}\left(\frac{h_\sigma}{a h}+\sum_i 2n_j\frac{(f_j)_\sigma}{a f_j}\right)\right.\\
	& \hskip 0.5cm \left. - \frac{(f_i)_{\sigma\sigma}}{a^2 f_i} + \frac{(f_i)_\sigma a_\sigma}{a^3 f_i} +\left(\frac{(f_i)_\sigma}{a f_i}\right)^2 - \frac{q_i^2h^2}{2f_i^4}\right\}g_s(X_i,Y_i),\\
	\Ric(\zeta^*,\zeta^*) & = \sum_i n_i\frac{q_i^2 h^2}{2f_i^4} - \frac{h_\sigma}{a h}\sum_i 2n_i \frac{f_\sigma}{a f_i} - \frac{h_{\sigma\sigma}}{a^2 h} + \frac{h_\sigma a_\sigma}{a^3 h},\\
	\Ric(\nu,\nu) & = - \frac{h_{\sigma\sigma}}{a^2 h} + \frac{h_\sigma a_\sigma}{a^3 h} - \sum_i 2n_i \left(\frac{(f_i)_{\sigma\sigma}}{a^2 f_i} - \frac{(f_i)_\sigma a_\sigma}{a^3 f_i}\right).
\end{align*}

Taking the time derivative of the evolving metric, one obtains
\[\D{g}{t} = 2a\dot{a} \,d\sigma^2 + 2h\dot{h} \eta \otimes \eta + \sum_i 2f_i\dot{f}_i\pi_i^*g_{N_i}\]
where $\dot{a}$, $\dot{h}$ and $\dot{f_i}$ denotes the partial derivatives by $t$ fixing $\sigma$.

\begin{proposition}
	\label{preservingansatz}
	Let $(M, g(0))$ be a closed Riemannian manifold constructed in Section \ref{sect:DW}. Along the K\"ahler-Ricci flow, the structure of the ansatz is preserved so that on $M_0 = I \times P$ the evolving metric is of the form
	\begin{align*}
		g(t) &= a^2(\sigma,t)\,d\sigma^2 + h(\sigma,t)^2\,\eta\otimes\eta + \sum_{i=1}^r f_i(\sigma,t)^2\pi_i^* g_{N_i}\\
		&= ds^2 + H(s, t)\eta\otimes\eta+  \sum_{i=1}^r F_i(s,t)^2\pi_i^* g_{N_i}.
	\end{align*}
	Furthermore, each conformal factor satisfies the heat-type equation
	\begin{equation}
		\label{eq:heat}
		\D{f_i^2}{t}  = \Delta_{g(t)}f_i^2 - 2k_i=\D{F_i^2}{t}  = \Delta_{g(t)}F_i^2 - 2k_i
	\end{equation}
\end{proposition}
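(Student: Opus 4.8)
The plan is to show that the Ricci flow equation, when restricted to metrics of the ansatz form, reduces to a well-posed system of scalar evolution equations for the coefficient functions, and then to invoke uniqueness of the Ricci flow on the compact manifold $M$ to conclude that the genuine solution never leaves the ansatz class. The starting point is Proposition \ref{prop:Ricci_Riemannian}: for any ansatz metric, $\Ric(g)$ is diagonal in the splitting $TM_0 = \R\nu \oplus \R\zeta^\ast \oplus \bigoplus_i \pi^\ast(TN_i)$, each $N_i$-block is a function of $(\sigma,t)$ times $\pi_i^\ast g_{N_i}$, and every cross term (including those between distinct factors $N_i,N_j$) vanishes. Since $\pi_i^\ast g_{N_i}$, $\eta$ and the background structure are fixed in $t$, matching $\partial_t g = -2\Ric(g)$ block by block produces scalar equations for $a^2$, $h^2$ and the $f_i^2$ alone; together with the closing conditions of Subsection \ref{subsect:closing}, these constitute the reduced flow.

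First I would read off the $\pi_i^\ast g_{N_i}$-block. Using the K\"ahler-simplified formula \eqref{RcKah2}, the Ricci flow gives $\partial_t(f_i^2)\,\pi_i^\ast g_{N_i} = -2\Ric = (\Delta_{g(t)} f_i^2 - 2k_i)\,\pi_i^\ast g_{N_i}$, which is exactly \eqref{eq:heat}; the $\eta\otimes\eta$ and $d\sigma^2$ blocks yield the companion equations for $h^2$ and $a^2$ from \eqref{RcKah1} and the $\Ric(\nu,\nu)$-formula. To ensure the flow really stays K\"ahler so that \eqref{RcKah2} is valid at all times, I would note that the K\"ahler condition \eqref{eq:Kahler}, $q_i H = (F_i^2)'$, is preserved by the K\"ahler-Ricci flow --- either by appealing to the standard fact that Ricci flow preserves the K\"ahler property, or by checking directly that $q_iH - (F_i^2)_s$ satisfies a homogeneous linear parabolic equation with vanishing initial data.

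The \emph{hard part} is the well-posedness of the reduced system, and hence the rigorous passage from ``the flow is tangent to the ansatz class'' to ``the flow preserves it.'' The obstruction is a genuine degeneracy: the equation for $a^2$ obtained from $\Ric(\nu,\nu)$ contains $a_\sigma$ but no $a_{\sigma\sigma}$, so the system is only degenerate parabolic --- this is precisely the reparametrization (gauge) freedom in the spatial variable $\sigma$. I would resolve it by the DeTurck trick: adding a Lie-derivative term $\mathcal{L}_W g$ with $W$ constructed from the ansatz data renders the reduced system strictly parabolic, so standard quasilinear parabolic theory gives short-time existence and uniqueness within the ansatz, and pulling back by the resulting (ansatz-preserving) diffeomorphisms recovers an honest Ricci flow solution. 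Alternatively, in the present K\"ahler setting one may pass to the scalar complex Monge-Amp\`ere potential flow: since $\det g$ is a function of the single fiber variable, the potential equation collapses to one $(1+1)$-dimensional scalar parabolic equation whose well-posedness is classical, and whose unique solution is automatically of ansatz type --- this route avoids the vectorial degeneracy entirely and does not require the base factors $N_i$ to be homogeneous.

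Finally, I would verify that the reduced solution preserves the smoothness/closing conditions at the endpoints of $I$ (e.g.\ $H'(s_0)=1$ and $H^{(2k)}(s_0)=F_i^{(2k+1)}(s_0)=0$ of Subsection \ref{subsect:closing}), so that the metric extends smoothly across the compactifying divisors to a genuine metric on the compact manifold $M$; this should follow because those boundary relations are propagated by the analytic reduced flow. With an ansatz Ricci flow solution on $M$ in hand, the short-time existence and uniqueness of the Ricci flow on compact $M$ forces it to coincide with $g(t)$, proving that the ansatz is preserved. The heat equation \eqref{eq:heat} is then precisely the $\pi_i^\ast g_{N_i}$-block identity derived above, and since $f_i$ and $F_i$ denote one and the same function written in the time-independent coordinate $\sigma$ and in the arclength coordinate $s$, both displayed forms hold.
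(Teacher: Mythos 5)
Your strategy coincides with the paper's: reduce the Ricci flow to an evolution system for the coefficient functions $a,h,f_i$ via the block-diagonal structure of $\Ric$ from Proposition \ref{prop:Ricci_Riemannian}, produce a solution within the ansatz class, and then invoke uniqueness of the Ricci flow on the compact manifold $M$ to conclude the ansatz is preserved; the heat equation \eqref{eq:heat} is then read off from the $\pi_i^*g_{N_i}$-block using \eqref{RcKah2}, exactly as in the paper. The difference is one of rigor rather than route: the paper simply writes down the reduced system and asserts that its solvability is ``a direct consequence of the existence theorem of ODEs,'' even though the system contains $\sigma$-derivatives and is in fact a degenerate parabolic PDE system (the $a$-equation has no $a_{\sigma\sigma}$ term). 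You correctly identify this degeneracy as the gauge freedom in $\sigma$ and propose to resolve it by a DeTurck-type modification or by passing to the scalar potential flow, and you also make explicit two points the paper leaves tacit --- that the K\"ahler condition \eqref{eq:Kahler} persists (so that \eqref{RcKah2} is valid for $t>0$) and that the closing conditions at $\partial I$ propagate so the solution extends over the compactifying divisors. Your version is therefore a more complete argument for the same proposition; the only caveat is that the DeTurck/Monge--Amp\`ere steps are sketched as a plan rather than carried out, so to match the paper's level of detail you would still need to write down the modified strictly parabolic system (or the reduced potential equation) and check its well-posedness up to the endpoints of $I$.
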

\begin{proof}
	The preservation of the ansatz is a direct consequence of the existence theorem of ODEs.
	Let $a(\sigma, t)$, $h(\sigma,t)$ and $f_i(\sigma,t)$ be the local solutions to the ODE system
	\begin{align*}
		\dot{a} & = \frac{h_{\sigma\sigma}}{a h} -  \frac{h_\sigma a_\sigma}{a^2 h} + \sum_i 2n_i \left(\frac{(f_i)_{\sigma\sigma}}{a f_i} - \frac{(f_i)_\sigma a_\sigma}{a^2 f_i}\right)\\ 
		\dot{h} & = -\sum_i n_i\frac{q_i^2 h^3}{2f_i^4} +\sum_i 2n_i \frac{h_\sigma f_\sigma}{a^2 f_i} + \frac{h_{\sigma\sigma}}{a^2} - \frac{h_\sigma a_\sigma}{a^3} \\
		\dot{f_i} & = -\frac{k_i}{f_i} + \frac{(f_i)_\sigma}{a}\left(\frac{h_\sigma}{a h}+\sum_j 2n_j\frac{(f_j)_\sigma}{a f_j}\right)\\
		& \hskip 0.5cm + \frac{(f_i)_{\sigma\sigma}}{a^2} - \frac{(f_i)_\sigma a_\sigma}{a^3} - \frac{{(f_i)_\sigma}^2}{a^2 f_i} + \frac{q_i^2h^2}{2f_i^3}. \nonumber
	\end{align*}
	 By the calculation above, the metric 
	\begin{align*}
		g(t) & = a(\sigma, t)^2 d\sigma^2 + h(\sigma,t)^2 \eta \otimes \eta + \sum_{i=1}^r f_i(\sigma,t)^2\pi_i^*g_{N_i}\\
		& = ds^2 + H(s,t)^2 \eta \otimes \eta + \sum_{i=1}^r F_i(s,t)^2 \pi_i^*g_{N_i}
	\end{align*}
	solves the Ricci flow equation on $M_0 = I \times P$. Since the solution to the Ricci flow on the whole compact manifold $M$ is unique, $g(t)$ must coincide with the Ricci flow solution when restricted on $M_0$. Hence, the ansatz described in Section \ref{sect:DW} is preserved.\\
	
	As the ansatz is preserved, we then have for any $X_i,Y_i \in \pi^\ast(TN_i)$,
	\[\D{g}{t}(X_i,Y_i)  = -2\Ric(X_i,Y_i).\]
	Applying equation (\ref{RcKah2}) yields
	\begin{align*}
		\D{}{t}f_i(\sigma, t)^2 g_{N_i}(X_i,Y_i) & = \left(\Delta f_i^2 - 2k_i\right)g_{N_i}(X_i,Y_i)\\
		\implies \D{}{t}f_i^2 & = \Delta f_i^2 - 2k_i
	\end{align*}
	Here we made use of the fact that $\Delta_{g(t)}$ is a spatial operator, so $\Delta f_i^2$ and $\Delta F_i^2$ are equal at the corresponding values of $\sigma$ and $s$. 
\end{proof}

Denote $u := f_j^2$, then by the well-known Bochner formula, the gradient term $\abs{\nabla u}$ would satisfy
\begin{equation}
	\label{eq:gradu}
	\D{}{t}\abs{\nabla u}^2 = \Delta\abs{\nabla u}^2 - 2\abs{\nabla\nabla u}^2
\end{equation}
In particular, $\abs{\nabla f_j^2}$ is uniformly bounded by the parabolic maximum principle.

\section{Type I Singularity}
According to Hamilton's classification \cite{Hsurvey}, a Ricci flow solution $g(t)$ which encounters finite-time singularity at $T < \infty$ can be classified into the two types:
\begin{itemize}
\item Type I singularity: if $\displaystyle{\lim_{t \to T^-}\sup_M (T-t)\abs{\Rm} < \infty}$
\item Type II singularity: if $\displaystyle{\lim_{t \to T^-}\sup_M (T-t)\abs{\Rm} = \infty}$	
\end{itemize}
One goal of this section is to establish that the singularity of the K\"ahler-Ricci flow under ansatz \eqref{eq:DW} must encounter Type I singularity.

\subsection{Riemann curvature in the K\"ahler case} To achieve the goal of this section, we first need to determine the Riemann curvature of $M$ along the horizontal components in the K\"ahler case, which will be needed in the proof.

The second fundamental of $P_s := \{s\} \times P$ in $M = I \times P$ is defined by:
\[h(u,w) := g(\nabla_u w, \nu), \forall u,v \in TP_s\]
where $\nu := \D{}{s}$ is the unit normal vector of $P_s$ in $M$. When restricted to horizontal vectors, the second fundamental is given by:

\begin{lemma}
\label{lma:h}
For any $X, Y \in \H$, we have
\[h(X,Y) = -\sum_i \left(\D{}{s}\log F_i\right) F_i^2\pi_i^* g_{N_i}(X,Y)\]
\end{lemma}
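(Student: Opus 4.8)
The plan is to avoid computing Christoffel symbols directly and instead reduce the second fundamental form to the shape operator $L_s(X) = \nabla_X \nu$, whose eigenvalues have already been recorded above. The starting point is the standard identity relating the two. For $X, Y \in TP_s$ we have $g(Y, \nu) = 0$, so differentiating in the direction $X$ gives
\[0 = X\big(g(Y, \nu)\big) = g(\nabla_X Y, \nu) + g(Y, \nabla_X \nu) = h(X,Y) + g(L_s X, Y).\]
Hence $h(X,Y) = -g(L_s X, Y)$; crucially, no information about the full covariant derivative $\nabla_X Y$ is required, since only its $\nu$-component contributes.

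Next I would use the eigenstructure of $L_s$ together with the product structure of $\H$. The horizontal distribution $\H = E = \ker \eta$ maps isomorphically onto $TN = TN_1 \oplus \cdots \oplus TN_r$ under $\pi_*$, and therefore splits as a direct sum $E = E_1 \oplus \cdots \oplus E_r$ of horizontal lifts of the factors $TN_i$. On the summand $E_i$ the operator $L_s$ acts by the scalar $\frac{F_i'}{F_i} = \D{}{s}\log F_i$, which is precisely the eigenvalue of multiplicity $2n_i$ computed above. Writing $X = \sum_i X_i$ and $Y = \sum_i Y_i$ accordingly, we obtain $L_s X = \sum_i \left(\D{}{s}\log F_i\right) X_i$.

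Finally, since $g$ restricted to $\H$ equals $\sum_j F_j^2 \pi_j^* g_{N_j}$, which is block-diagonal with respect to the splitting $E = \bigoplus_i E_i$, the inter-factor cross-terms vanish and $g(X_i, Y) = F_i^2\, \pi_i^* g_{N_i}(X, Y)$. Substituting this into $h(X,Y) = -g(L_s X, Y)$ yields the claimed formula. There is essentially no hard step here: the only two points demanding care are the sign in $h = -g(L_s\cdot,\cdot)$, which is pinned down by the convention $\nu = \D{}{s}$, and the vanishing of the cross-terms, which is immediate from the product form of the ansatz \eqref{eq:DW}. I note also that the K\"ahler condition \eqref{eq:Kahler} is never used, so the identity in fact holds for the general Riemannian ansatz.
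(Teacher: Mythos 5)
Your proof is correct, but it takes a genuinely different route from the paper. The paper proves the lemma by specializing its Lemma \ref{lm1}: it writes $h(X,Y)=g(A_XY,\nu)$, substitutes the explicit formula for the $A$-tensor, observes that the $\pi_i^*\omega_{N_i}$ term drops out because $g(J\nabla\log F_i,\nu)=0$, and then disposes of the residual $-\tfrac{1}{2}d\omega(X,Y,J\nu)$ term by a symmetry-versus-antisymmetry argument (which, as a byproduct, establishes $d\omega(X,Y,J\nu)=0$). You instead bypass the O'Neill machinery entirely: differentiating $g(Y,\nu)=0$ gives the standard duality $h(X,Y)=-g(L_sX,Y)$ between the second fundamental form and the shape operator, and you then read off the answer from the eigenvalue data for $L_s$ that the paper records just before this point (namely that $L_s$ acts on the horizontal lift $E_i$ of $TN_i$ by $F_i'/F_i$ and that the restriction of $g$ to $\H$ is block-diagonal). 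Your sign bookkeeping is consistent with the paper's conventions $\nu=\partial/\partial s$, $L_s(X)=\nabla_X\nu$, $h(u,w)=g(\nabla_uw,\nu)$, and your remark that the K\"ahler condition is not needed is also consistent with the paper, whose proof likewise never invokes \eqref{eq:Kahler}. The one dependency worth flagging is that you lean on the eigenspace decomposition of $L_s$ (not merely the eigenvalues and multiplicities), which the paper asserts without proof following \cite[Remark 2.18]{DW11coho}; a fully self-contained version of your argument would verify $\nabla_X\nu=(F_i'/F_i)X$ for $X\in E_i$ by a short Koszul computation, which is essentially the same content as the $g(\nabla F_i^2,\nu)$ term in the paper's Lemma \ref{lm1}. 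In exchange, your route is shorter and avoids the $d\omega$ symmetry argument altogether.
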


\begin{proof}
For any $X,Y \in \H$, we have from Lemma \ref{lm1} 
\begin{align*}
h(X,Y) & = g(\nabla_X Y, \nu) = g(A_XY, \nu)\\
& = -\sum_i F_i^2 \left\{\pi_i^*\omega_{N_i}(X,Y) g\left(J\nabla\log F_i, \nu\right) + \pi^*_i g_{N_i}(X,Y) g\left(\nabla\log F_i, \nu\right)\right\}\\
& \hskip 1cm -\frac{1}{2}d\omega(X,Y,J\nu)\\
& = -\sum_i F_i^2 \pi^*_i g_{N_i}(X,Y) g\left(\nabla\log F_i, \nu\right) -\frac{1}{2}d\omega(X,Y,J\nu)\\
& = -\sum_i \left(\nu\log F_i\right) F_i^2\pi_i^* g_{N_i}(X,Y) - \frac{1}{2}d\omega(X,Y,J\nu).
\end{align*}
Since $h(\cdot,\cdot)$ and $g_{N_i}(\cdot,\cdot)$ are symmetric while $d\omega(\cdot,\cdot,J\nu)$ in anti-symmetric, we must have $d\omega(X,Y,J\nu) = 0$, completing the proof.
\end{proof}
\begin{remark}
In particular, when $r = 1$ (single base), we then have
\[h(X,Y) = -\left(\D{}{s}\log F\right)g(X,Y).\]
\end{remark}

The Riemann curvature tensor of $M_0 = I \times P$ can be expressed in terms of that of $P_s$ by the Gauss-Codazzi equations for hypersurfaces. Since $\pi : P \to N$ is a Riemannian submersion, one can then find out the Riemann curvature tensor of $P_s$ by known formulae in the literature.

The Gauss-Codazzi equations assert that for any $u,v,w,x \in TP_s = \textup{span}\{\zeta\} \oplus \H$, the Riemann curvature $\Rm$ of $(M_0,g)$ is given by
\begin{align*}
\Rm(u,v,w,x) & = \widetilde{\Rm}(u,v,w,x) + h(u,w)h(v,x) - h(v,w)h(u,x)\\
\Rm(u,v,w,\nu) & = (\tilde\nabla_u h)(v,w) - (\tilde\nabla_v h)(u,w)
\end{align*}
where $\tilde\nabla$ is the covariant derivative of $P_s$. For the purpose of this article, we only need to determine $\Rm(X,Y,Z,W)$ where $X,Y,Z,W \in \H$ when $M$ is K\"ahler.
 
Let $X,Y,Z,W \in \H$, we have
\[\Rm(X,Y,Z,W) = \widetilde{\Rm}(X,Y,Z,W) + h(X,Z)h(Y,W) - h(Y,Z)h(X,W)\]

Now we only consider the case that $(M,J,g)$ is K\"ahler, so that $q_i H = 2F_iF_i'$ for any $i$. From Lemma \ref{lma:h}, for $X, Y \in \H$, we then have
\begin{align*}
h(X,Y) & = -\sum_i F_iF_i'\pi_i^*g_{N_i}(X,Y)\\
& = -\sum_i \frac{q_i}{2}H \pi_i^*\omega_{N_i}(X,JY)\\
& = -\frac{1}{2}Hd\eta(X,JY).
\end{align*}

Similarly, from Lemma \ref{lm1} the $A$-tensor is given by:
\begin{align*}
A_XY & = \frac{1}{2}d\eta(X,Y)\zeta - \frac{1}{2}H d\eta(X,JY)\nu,& \forall X, Y \in \H.
\end{align*}

As for the $\widetilde{\Rm}$ terms, we recall that $\pi : P_s \to N$ is a Riemannian submersion, so for any vectors $\{X,Y,Z,W\}$ on $N$ which are lifted to vectors $\{\tilde{X},\tilde{Y},\tilde{Z},\tilde{W}\}$ on $P_s$, it is well-known that:
\begin{align*}
\widetilde{\Rm}(\tilde{X},\tilde{Y},\tilde{Z},\tilde{W}) & = \Rm_N(X,Y,Z,W) -\frac{1}{4}g_{P_s}\big(\V[\tilde{X},\tilde{Z}], \V[\tilde{Y},\tilde{W}]\big)\\
& \hskip 0.5cm + \frac{1}{4}g_{P_s}\big(\V[\tilde{Y},\tilde{Z}], \V[\tilde{X},\tilde{W}]\big) - \frac{1}{2}g_{P_s}\big(\V[\tilde{Z},\tilde{W}], \V[\tilde{X},\tilde{Y}]\big)
\end{align*}
By \eqref{eq:V[X,Y]}, we have
\begin{align*}
\V[\tilde{X},\tilde{Y}] & = -2\sum_{i=1}^r F_i^2 \pi_i^*\omega_{N_i}(\tilde{X}, \tilde{Y}) \V J\nabla \log F_i\\
& = -2\sum_{i=1}^r F_i^2 \pi_i^*\omega_{N_i}(\tilde{X}, \tilde{Y}) \V J\left(\D{}{s}\log F_i \cdot \D{}{s}\right)\\
& = 2\sum_{i=1}^r F_i^2 \cdot \frac{F_i'}{F_i} \cdot \frac{1}{H} \pi_i^*\omega_{N_i}(\tilde{X},\tilde{Y})\cdot \zeta\\
& = \sum_{i=1}^r q_i\pi_i^*\omega_{N_i}(\tilde{X},\tilde{Y}) \cdot \zeta & \text{(since $q_iH = \frac{\partial}{\partial s} F_i^2$)}\\
& = d\eta(\tilde{X},\tilde{Y}) \cdot \zeta
\end{align*}
Recall that $\abs{\zeta}^2 = H^2$. The above shows
\begin{align*}
& \widetilde{\Rm}(\tilde{X},\tilde{Y},\tilde{Z},\tilde{W})\\
& = \Rm_N(X,Y,Z,W) - \frac{1}{4}H^2 d\eta(\tilde{X},\tilde{Z})d\eta(\tilde{Y},\tilde{W}) + \frac{1}{4}H^2 d\eta(\tilde{Y},\tilde{Z})d\eta(\tilde{X},\tilde{W})\\
& \hskip 0.5cm - \frac{1}{2}H^2 d\eta(\tilde{Z},\tilde{W})d\eta(\tilde{X},\tilde{Y})\\
& = \sum_{i=1}^r \pi_i^* \Rm_{(N_i,F_i^2 g_{N_i})}(X,Y,Z,W) - \frac{1}{4}H^2 d\eta(\tilde{X},\tilde{Z})d\eta(\tilde{Y},\tilde{W}) +\\
& \hskip 0.5cm \frac{1}{4}H^2 d\eta(\tilde{Y},\tilde{Z})d\eta(\tilde{X},\tilde{W}) - \frac{1}{2}H^2 d\eta(\tilde{Z},\tilde{W})d\eta(\tilde{X},\tilde{Y})
\end{align*}
Since the rescaling factor $F_i(s)^2$ is independent of the coordinates of $N_i$, we have
\[\Rm_{(N_i, F_i^2,g_{N_i})} = F_i^2 \Rm_{(N_i,g_{N_i})}.\]
Hence we get
\begin{align*}
& \widetilde{\Rm}(\tilde{X},\tilde{Y},\tilde{Z},\tilde{W})\\
& = \sum_{i=1}^r F_i^2 \pi_i^* \Rm_{N_i}(X,Y,Z,W)  - \frac{1}{4}H^2 d\eta(\tilde{X},\tilde{Z})d\eta(\tilde{Y},\tilde{W}) + \frac{1}{4}H^2 d\eta(\tilde{Y},\tilde{Z})d\eta(\tilde{X},\tilde{W})\\
& \hskip 0.5cm - \frac{1}{2}H^2 d\eta(\tilde{Z},\tilde{W})d\eta(\tilde{X},\tilde{Y})
\end{align*}
where we abbreviate $\Rm_{(N_i,F_i^2 g_{N_i})}$ by $\Rm_{N_i}$. Hence, we conclude that
\begin{align*}
& \Rm(\tilde{X},\tilde{Y},\tilde{Z},\tilde{W})\\
& = \widetilde{\Rm}(\tilde{X},\tilde{Y},\tilde{Z},\tilde{W}) + h(\tilde{X},\tilde{Z})h(\tilde{Y},\tilde{W}) - h(\tilde{Y},\tilde{Z})h(\tilde{X},\tilde{W})\\
& = \sum_i F_i^2 \pi_i^* \Rm_{N_i}(X,Y,Z,W) - \frac{1}{4}H^2 d\eta(\tilde{X},\tilde{Z})d\eta(\tilde{Y},\tilde{W}) + \frac{1}{4}H^2 d\eta(\tilde{Y},\tilde{Z})d\eta(\tilde{X},\tilde{W})\\
& \hskip 0.5cm  - \frac{1}{2}H^2 d\eta(\tilde{Z},\tilde{W})d\eta(\tilde{X},\tilde{Y}) + \frac{1}{4}H^2 d\eta(\tilde{X},J\tilde{Z})d\eta(\tilde{Y},J\tilde{W})\\
& \hskip 0.5cm - \frac{1}{4}H^2 d\eta(\tilde{Y},J\tilde{Z}) d\eta(\tilde{X},J\tilde{W}).
\end{align*}
In particular,
\begin{align*}
& \Rm(\tilde{X},\tilde{Y},\tilde{Y},\tilde{X})\\
& = \sum_i F_i^2\pi_i^*\Rm_{N_i}(X,Y,Y,X) + \frac{3}{4}H^2d\eta(\tilde{X},\tilde{Y})^2 + \frac{1}{4}H^2d\eta(\tilde{X},J\tilde{Y})^2\\
& \hskip 0.5cm -\frac{1}{4}H^2d\eta(\tilde{Y},J\tilde{Y})d\eta(\tilde{X},J\tilde{X})
\end{align*}
For any $X, Y \in \H$, one can write $X = \sum_i X_i$ and $Y = \sum_i Y_i$ where $\pi_* X_i, \pi_* Y_i \in TN_i$, then
\begin{align*}
& \sum_i F_i^2\pi_i^* \Rm_{N_i}(X,Y,Y,X)\\
& = \sum_i F_i^2 \pi_i^*\Rm_{N_i}(X_i,Y_i,Y_i,X_i) \\
& = \sum_i \frac{1}{F_i^2} \pi_i^*\Rm_{N_i}(F_iX_i,F_iY_i,F_iY_i,F_iX_i).	
\end{align*}
Observing that
\[\abs{d\eta(X,Y)} \leq \sum_i \abs{q_i\pi^*_ig_{N_i}(JX,Y)} \leq \sum_i \abs{q_i}\abs{JX_i}_{g_{N_i}}\abs{Y_i}_{g_{N_i}} \leq \sum_i \frac{\abs{q_i}}{F_i^2}\abs{X}_g \abs{Y}_g,\]
we conclude that
\begin{align}
\label{eq:Rm(XYYX)}
& \Rm(\tilde{X},\tilde{Y},\tilde{Y},\tilde{X})\\
\nonumber & = \sum_i \frac{1}{F_i^2}\Rm_{N_i}(F_iX_i,F_iY_i,F_iY_i,F_iX_i) + \sum_i O\left(\frac{H^2}{F_i^4}\abs{X}_g^2\abs{Y}_g^2\right).	
\end{align}

Recall that by the K\"ahler condition $qH = \D{}{s}F_i^2$, we have
\[O\left(\frac{H^2}{F_i^4}\right) = O\left(\frac{1}{F_i^4}(F_iF_i')^2\right) = O\left(\abs{\nabla \log F_i}^2\right).\]

\subsection{Li-Yau's gradient estimates}
One key ingredient in the proof of Type I singularity is the use of Li-Yau's gradient estimates for heat equations. Given that $u$ is a positive solution satisfying the heat-type equation \eqref{eq:heat}, the Li-Yau's estimates proves that $\frac{\abs{\nabla u}^2}{u}$ is uniformly bounded. For convenience, we include the proof here:

\begin{proposition}[Li-Yau \cite{ly86}]
Suppose $u : M \times [0,T) \to (0,\infty)$ is a positive solution to the heat-type equation \eqref{eq:heat} so that \eqref{eq:gradu} holds , then there exists $C > 0$ such that
\begin{equation}
\label{eq:gradu/u}
\frac{\abs{\nabla u}^2}{u} \leq C.
\end{equation}
\end{proposition}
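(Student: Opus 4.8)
The plan is to derive a parabolic differential inequality for the quantity $Q := \abs{\nabla u}^2/u$ and to close it off by the maximum principle on the closed manifold $M$. Throughout write $P := \abs{\nabla u}^2$, and recall that $u$ solves $\partial_t u = \Delta u - 2k$ (equation \eqref{eq:heat}, with $k$ one of the $k_i$), while $P$ solves \eqref{eq:gradu}, the Ricci terms in the Bochner formula having cancelled against $\partial_t g = -2\Ric$. Both equations hold on all of $M$ by smooth extension from $M_0$, and $u>0$ on $M$ for each fixed $t<T$ because the closing conditions keep the $F_i$ bounded away from $0$ away from the singular time, so $Q$ is a well-defined smooth function on $M$ at each such $t$.

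First I would compute $\partial_t Q - \Delta Q$. Using $\nabla Q = u^{-1}\nabla P - u^{-2}P\,\nabla u$ together with the two evolution equations, and substituting $\nabla P = u\,\nabla Q + Q\,\nabla u$ to trade the $\nabla P$ terms for $\nabla Q$, a direct calculation gives
\[
\partial_t Q = \Delta Q + \frac{2}{u}\langle \nabla u, \nabla Q\rangle - \frac{2\abs{\nabla\nabla u}^2}{u} + \frac{2k}{u}\,Q,
\]
where the $-2u^{-1}\abs{\nabla\nabla u}^2$ term comes from \eqref{eq:gradu}, the reaction term $\tfrac{2k}{u}Q$ from the source $-2k$ in \eqref{eq:heat}, and the first-order term $\tfrac{2}{u}\langle\nabla u,\nabla Q\rangle$ from the elimination of $\nabla P$ (in particular, the cubic term $-2P^2/u^3$ produced by expanding $\Delta(P/u)$ is absorbed in this process).

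Next I would extract a good negative term at a spatial maximum. At a point $x_t$ where $Q(\cdot,t)$ attains its spatial maximum one has $\nabla Q=0$ and $\Delta Q\le 0$; the first identity reads $\nabla P = Q\,\nabla u$, i.e. $\nabla\nabla u(\nabla u,\cdot)=\tfrac{Q}{2}\nabla u$, whence $\nabla\nabla u(\nabla u,\nabla u)=\tfrac{Q}{2}P=\tfrac{P^2}{2u}$. The Cauchy--Schwarz refinement $\abs{\nabla\nabla u}^2 \ge \frac{\left(\nabla\nabla u(\nabla u,\nabla u)\right)^2}{\abs{\nabla u}^4}$ then gives $\abs{\nabla\nabla u}^2 \ge P^2/(4u^2)$, so that $-2u^{-1}\abs{\nabla\nabla u}^2 \le -Q^2/(2u)$ at $x_t$.

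Finally I would run Hamilton's trick for the evolving maximum $Q_{\max}(t):=\max_M Q(\cdot,t)$. Dropping $\Delta Q\le 0$ and the vanishing gradient term, the estimates above give
\[
\ddt Q_{\max} \le \frac{Q_{\max}}{u(x_t)}\left(2k - \frac{Q_{\max}}{2}\right),
\]
which is $\le 0$ as soon as $Q_{\max}>4k$; hence $Q_{\max}(t)\le \max\{Q_{\max}(0),\,4k\}=:C$ for all $t\in[0,T)$, and when $k\le 0$ the bound is immediate from the initial datum alone. I expect the main obstacle to be exactly the reaction term $\tfrac{2k}{u}Q$ when $k>0$: since $u=F_i^2\to 0$ as a fibre or factor collapses at the singular time, this term naively blows up. What rescues the argument is that the good Hessian term is \emph{quadratic} in $Q$ and carries the \emph{same} factor $u^{-1}$, so the right-hand side factors as $\tfrac{Q}{u}\bigl(2k-\tfrac{Q}{2}\bigr)$; the possibly enormous factor $u^{-1}$ then multiplies a negative quantity once $Q$ is large, and the degeneration of $u$ does no harm.
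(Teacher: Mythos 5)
Your proposal is correct and follows essentially the same route as the paper: derive a parabolic inequality for $Q=\abs{\nabla u}^2/u$ and close it with the maximum principle, using the competition between the reaction term $\tfrac{2k}{u}Q$ and a good term of the form $-c\,\tfrac{Q^2}{u}$. The only (harmless) difference is that you extract the quadratic good term at the spatial maximum via the critical-point identity $\nabla\abs{\nabla u}^2=Q\nabla u$ together with Cauchy--Schwarz on the Hessian, whereas the paper produces it pointwise by an $\varepsilon$-splitting of the gradient term combined with Young's inequality.
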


\begin{proof}
By straight-forward computations, we get
\begin{equation}
\label{eq:box_gradu/u}
\left(\D{}{t}-\Delta\right)\frac{\abs{\nabla u}^2}{u} = -\frac{C\abs{\nabla\nabla u}^2}{u} + \frac{2\langle \nabla\abs{\nabla u}^2, \nabla u\rangle}{u^2} + \frac{2k\abs{\nabla u}^2}{u^2} - \frac{2\abs{\nabla u}^4}{u^3}
\end{equation}
Consider the second term of the RHS:
\begin{align*}
\frac{2\langle \nabla\abs{\nabla u}^2, \nabla u\rangle}{u^2} & = \frac{2\left\langle \nabla\frac{\abs{\nabla u}^2}{u}, \nabla u\right\rangle}{u} + \frac{2\abs{\nabla u}^4}{u^3}\\
\frac{2\langle \nabla\abs{\nabla u}^2, \nabla u\rangle}{u^2} & = (2-\varepsilon)\frac{\langle \nabla\abs{\nabla u}^2, \nabla u\rangle}{u^2} + \varepsilon\frac{\langle \nabla\abs{\nabla u}^2, \nabla u\rangle}{u^2}\\
& = (2-\varepsilon)\left(\frac{\left\langle \nabla\frac{\abs{\nabla u}^2}{u}, \nabla u\right\rangle}{u} + \frac{\abs{\nabla u}^4}{u^3}\right) + \varepsilon\frac{\langle \nabla\abs{\nabla u}^2, \nabla u\rangle}{u^2}\\
& \leq (2-\varepsilon)\left(\frac{\left\langle \nabla\frac{\abs{\nabla u}^2}{u}, \nabla u\right\rangle}{u} + \frac{\abs{\nabla u}^4}{u^3}\right) + 2\varepsilon \frac{\abs{\nabla\nabla u}\abs{\nabla u}^2}{u^2}\\
& \leq (2-\varepsilon)\left(\frac{\left\langle \nabla\frac{\abs{\nabla u}^2}{u}, \nabla u\right\rangle}{u} + \frac{\abs{\nabla u}^4}{u^3}\right) + \varepsilon\left(\frac{2\abs{\nabla\nabla u}^2}{u} + \frac{\abs{\nabla u}^4}{2u^3}\right)
\end{align*}
where $\varepsilon > 0$ is to be chosen. Substituting it back to \eqref{eq:box_gradu/u}, we get
\begin{equation*}
\left(\D{}{t}-\Delta\right)\frac{\abs{\nabla u}^2}{u} \leq (-C+2\varepsilon)\frac{\abs{\nabla\nabla u}^2}{u} + (2-\varepsilon)\frac{\left\langle \nabla\frac{\abs{\nabla u}^2}{u}, \nabla u\right\rangle}{u} + \frac{2k\abs{\nabla u}^2}{u^2} -\frac{\varepsilon}{2}\frac{\abs{\nabla u}^4}{u^3}.
\end{equation*}
Choose $\varepsilon > 0$ sufficiently small so that $-C + \varepsilon < -\frac{C}{2}$ and $-2 + \varepsilon < -1$, then we have
\begin{equation}
\label{eq:box_gradu/u2}
\left(\D{}{t}-\Delta\right)\frac{\abs{\nabla u}^2}{u} \leq -\frac{C}{2}\frac{\abs{\nabla\nabla u}^2}{u} + (2-\varepsilon)\frac{\left\langle \nabla\frac{\abs{\nabla u}^2}{u}, \nabla u\right\rangle}{u} + \frac{2k\abs{\nabla u}^2}{u^2} - \frac{\varepsilon}{2}\frac{\abs{\nabla u}^4}{u^3}.
\end{equation}
If $k \leq 0$, then the maximum principle directly shows \eqref{eq:gradu/u}. Suppose $k > 0$, then by letting $Q = \frac{\abs{\nabla u}^2}{u}$, \eqref{eq:box_gradu/u2} can be written as
\[\left(\D{}{t}-\Delta\right)Q \leq (2-\varepsilon)\frac{\langle \nabla Q, \nabla u\rangle}{u} + \frac{2k}{u}Q - \frac{\varepsilon}{2u}Q^2.\]
This implies $Q_{\max}(t) := \sup_{M \times \{t\}}Q$ is decreasing if $Q_{\max}(t) > \frac{4k}{\varepsilon}$. This also proves $Q_{\max}(t)$ is uniformly bounded from above, and hence \eqref{eq:gradu/u} holds.
\end{proof}

\subsection{Schwarz's estimates}
In this subsection we are going to derive a linear lower bound for each of $F_j$'s, and prove that if $T < \infty$ is the extinction time of the K\"ahler-Ricci flow under the ansatz \eqref{eq:DW}, then there exists a constant $C > 0$ such that $g(t) \geq \frac{1}{C}(T-t)\pi^*g_{N_i}$ which is commonly called the Schwarz's type estimates. Under the ansatz \eqref{eq:DW}, it suffices to show there exists $C > 0$ such that
\[f_j^2 \geq \frac{T-t}{C}\]
for any $t \in [0,T)$ and $j = 1, \cdots, r$. Recall that $f_j^2$ satisfies the heat equation \eqref{eq:heat}

\[\D{f_j^2}{t} = \Delta_{g(t)} f_j^2 - 2k_j.\]
in the K\"ahler case. It is interesting to observe that when restricted to the boundary $\partial M_0$, the Laplacian term $\Delta_{g(t)}f_j^2$ is a topological constant independent of $t$. It can be shown as follows.

Under the change of variables by the relation:
\[\frac{1}{H(s)}ds = \frac{1}{r}dr, \quad r(s_0) = 0,\]
where $s_0$ is the left-end point of $I$. Write $\bar{H}(r(s)) = H(s)$ and $\bar{F}_j(r(s)) = F_j(s)$. Recall that the closing conditions on $\bar{H}(r)$ and $\bar{F}_j(r)$'s are that there exist smooth functions $\phi(r)$ and $\psi_j(r)$ on $(-\varepsilon,\varepsilon)$, with $\phi(0), \psi_j(0) > 0$, such that
\[\bar{H}(r) = r\phi(r^2) \quad \text{and} \quad \bar{F}_j(r) = \psi_j(r^2).\]
Using the K\"ahler condition $q_j H(s) = \D{}{s}F_j^2$, the Laplacian term $\Delta F_j^2$ is then given by:
\begin{align*}
\Delta F_j^2 & = \partial_s\partial_s F_j^2 + \left(\frac{H_s}{H} + \sum_i 2n_i \frac{(F_i)_s}{F_i}\right) \partial_s F_j^2\\
& = \partial_s(q_jH)+\left(\frac{H_s}{H} + \sum_i \frac{n_iq_iH}{F_i^2}\right)q_j H\\
& = 2q_j H_s + \sum_i \frac{n_iq_iq_jH^2}{F_i^2}\\
& = 2q_j(r\phi(r^2))_r\frac{dr}{ds} + \sum_i \frac{n_iq_iq_jr\phi^2(r^2)}{\psi_i^2(r^2)}\\
& = 2q_j\cdot\frac{\phi(r^2) + 2r^2\phi'(r^2)}{\phi(r^2)} + \sum_i \frac{n_iq_iq_jr^2\phi^2(r^2)}{\psi_i^2(r^2)}
\end{align*}
Therefore, when $r = 0$, we have $\Delta F_j^2 = 2q_j$. Similarly, we have $\Delta F_j^2 = -2q_j$ at the right end-point of $I$.

This shows when restricted on the boundary $\partial M_0$, we have
\[\D{f_j^2}{t} = \pm 2q_j - 2k_j \implies f_j(\sigma,t)|_{\partial M_0} = f_j(\sigma,0)|_{\partial M_0} - 2(\mp q_j + k_j)t.\]
Moreover, by the K\"ahler condition we have $q_j H = \D{}{s}F_j^2$ and $H > 0$ except that the end-points of $I$. As $q_j \not= 0$, $F_j^2$ is then either strictly increasing or strictly decreasing on $I$. This shows $F_j^2$ is bounded above and below by a pair of linear functions $A-Bt$ of $t$ with $A > 0$ and $B \in \R$. If any of these linear functions goes to $0$ as $t \to T$, then the flow will extinct. Therefore these linear functions are either bounded away from $0$ on $[0,T)$, or is of the form $\frac{T-t}{C}$ for some constant $C > 0$. In either case, we can conclude that

\begin{lemma}
There exists $C > 0$ such that
\begin{equation}
\label{eq:Schwarz}
f_j^2 \geq \frac{1}{C}(T-t).
\end{equation}	
\end{lemma}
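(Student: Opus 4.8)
The plan is to exploit the strict spatial monotonicity of $F_j^2$ coming from the K\"ahler condition, together with the boundary computation of $\Delta F_j^2$ carried out above, in order to trap $f_j^2$ between linear functions of $t$ and then read off the stated lower bound.

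First I would record the monotonicity reduction. In the ansatz each $F_j$ is, at fixed $t$, a function of the single spatial variable $s$, and the K\"ahler relation $q_j H(s) = \D{}{s}F_j^2$ with $H > 0$ on the interior of $I$ and $q_j \neq 0$ forces $F_j^2$ to be strictly monotone in $s$. Hence at every fixed time the global extrema of $f_j^2$ over $M$ are attained exactly on the two components of $\partial M_0$, i.e.\ at the two end-points of $I$. This reduces the control of $f_j^2$ on all of $M$ to the analysis of its two boundary values.

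Next I would track those boundary values in time. Substituting the boundary identity $\Delta F_j^2 = \pm 2 q_j$ into the heat-type equation $\D{f_j^2}{t} = \Delta_{g(t)} f_j^2 - 2 k_j$ shows that on each boundary component $f_j^2$ obeys the ODE $\D{f_j^2}{t} = \pm 2 q_j - 2 k_j$, so it is an explicit affine function of $t$. Combined with the monotonicity reduction, this squeezes $f_j^2$ between two affine envelopes $A - B t$ (with $A > 0$ and $B \in \R$), namely the two boundary values.

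Finally I would extract the estimate. Let $\ell(t) = A - B t$ be the lower envelope, i.e.\ the smaller of the two boundary values at each time. Since the flow is non-degenerate on $[0,T)$ we have $\ell(t) > 0$ there, so the zero of $\ell$ cannot occur strictly before $T$. Thus either $\ell$ stays bounded below by some $\delta > 0$ on $[0,T)$, in which case $f_j^2 \geq \delta \geq \frac{\delta}{T}(T-t)$ since $T - t \leq T$; or $\ell$ vanishes exactly at $t = T$, in which case $\ell(t) = B(T-t)$ is a positive multiple of $T - t$. In either case $f_j^2 \geq \frac{1}{C}(T-t)$ for a suitable $C > 0$, and taking the maximum of the finitely many constants makes this uniform in $j$. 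I expect the only point requiring care to be the correlation with the extinction time: one must rule out that $\ell$ reaches zero inside $(0,T)$, which is precisely the statement that $F_j^2$ cannot collapse before the flow's singular time, so that the affine lower envelope degenerates no faster than $(T-t)$.
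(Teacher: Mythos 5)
Your proposal is correct and follows essentially the same route as the paper: spatial monotonicity of $F_j^2$ from the K\"ahler condition reduces everything to the boundary values, the boundary identity $\Delta F_j^2 = \pm 2q_j$ turns the heat equation into an explicit affine ODE there, and the affine lower envelope cannot vanish before $T$ without the flow degenerating, which yields the $\tfrac{1}{C}(T-t)$ bound. The point you flag as requiring care (ruling out vanishing strictly inside $(0,T)$) is exactly the step the paper disposes of by observing that such vanishing would force extinction of the flow before its maximal time.
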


\subsection{Type I singularity}

In this subsection, we prove the main result of this article that Type I singularity must occur on the K\"ahler-Ricci flow on manifolds constructed by the ansatz defined in Section \ref{sect:DW}. Our results further extend and generalize former works \cite{Fong14,Song15,GS16} in a sense that these former works require Calabi's $U(n)$-symmetry (a special case of the ansatz \eqref{eq:DW}) so that one can analyze and estimate the K\"ahler potential. These works also require the base manifold of the submersion $\pi : P \to N$ to be \emph{one} single K\"ahler-Einstein manifold. Recently, Jian-Song-Tian \cite{jst23} further generalized the aforesaid-said result to \emph{higher rank} bundles over \emph{one} K\"ahler-Einstein manifold, proving that the K\"ahler-Ricci flow with Calabi's symmetry on these bundles must also encounter Type I singularity. In the following result, we give a generalization in a different direction: that the fibers are still $\CP^1$ but we allow the base manifold to be a \emph{product} of K\"ahler-Einstein manifolds. We also adopt a different approach in analyzing the curvature, by making use of the O'Neill's tensor to reveal the structure of the curvature.

\begin{theorem}
\label{thm:TypeI}
Let $(M^{2n},J,g_0)$ be a compact K\"ahler manifold with $\dim_{\R} = 2n$ such that there exists a holomorphic submersion $M \to N$ onto the product $N$ of K\"ahler-Einstein manifolds $\{(N_i, g_{N_i}, J_{N_i})_{i=1}^r$, and there exists an open dense subset $M_0$ which is biholomorphic to $I \times P$ for some open interval $I$ and Riemannian manifold $P^{2n-1}$ such that on $I \times P$ the initial metric $g_0$ is constructed using the ansatz described in Section \ref{sect:DW}. In particular, we have
\[g_0 = ds^2 + H(s)^2 \eta \otimes \eta + \sum_{i=1}^r F_i(s)^2 \pi_i^* g_{N_i}.\]
Then, this ansatz is preserved along the K\"ahler-Ricci flow
\[\D{}{t}g(t) = -2\Ric(g(t)), \quad g(0) = g_0.\]
Suppose the solution to flow $g(t)$ encounters finite-time singularity at $T < \infty$, then the singularity is of Type I, i.e. there exists $C > 0$ such that
\[\abs{\Rm}_{g(t)} \leq \frac{C}{T-t}.\]
\end{theorem}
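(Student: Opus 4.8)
The plan is to establish the upper bound $\abs{\Rm}_{g(t)} \leq \frac{C}{T-t}$ by controlling each type of component of the curvature tensor separately, exploiting the block structure afforded by the ansatz. Since Proposition \ref{prop:Ricci_Riemannian} and the subsequent curvature computations show that all curvatures are built from the quantities $\frac{H''}{H}$, $\frac{F_i''}{F_i}$, $\frac{H'}{H}$, $\frac{F_i'}{F_i}$, $\frac{k_i}{F_i^2}$, $\frac{q_i^2 H^2}{F_i^4}$, and the base curvatures $\Rm_{N_i}$, the strategy is to bound each of these by $\frac{C}{T-t}$. The K\"ahler condition $q_i H = \D{}{s}F_i^2 = 2F_i F_i'$ is the crucial simplifying identity: it allows me to rewrite $\frac{q_i^2 H^2}{F_i^4} = 4\abs{\nabla\log F_i}^2 = 4\frac{\abs{\nabla F_i^2}^2}{4F_i^4} = \frac{\abs{\nabla F_i^2}^2}{F_i^4}$, converting curvature expressions into gradient quantities governed by heat equations.

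The key analytic inputs are already prepared in the preceding subsections. First, by the Li-Yau estimate \eqref{eq:gradu/u} applied to $u = F_i^2$, I have $\frac{\abs{\nabla F_i^2}^2}{F_i^2} \leq C$, so $\abs{\nabla\log F_i}^2 = \frac{\abs{\nabla F_i^2}^2}{4F_i^4} \leq \frac{C}{F_i^2}$. Combining this with the Schwarz-type lower bound \eqref{eq:Schwarz}, namely $F_i^2 \geq \frac{1}{C}(T-t)$, yields immediately
\[\abs{\nabla\log F_i}^2 \leq \frac{C}{F_i^2} \leq \frac{C'}{T-t},\]
which controls the ``mixed'' and ``fiber'' curvature terms such as $\frac{q_i^2 H^2}{F_i^4}$ and the $A$-tensor contributions appearing in \eqref{eq:Rm(XYYX)}. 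For the purely horizontal Riemann curvature, equation \eqref{eq:Rm(XYYX)} shows
\[\Rm(\tilde X,\tilde Y,\tilde Y,\tilde X) = \sum_i \frac{1}{F_i^2}\Rm_{N_i}(F_iX_i,F_iY_i,F_iY_i,F_iX_i) + \sum_i O\!\left(\abs{\nabla\log F_i}^2\abs{X}_g^2\abs{Y}_g^2\right),\]
so since $\Rm_{N_i}$ is a \emph{fixed} curvature tensor on the compact $(N_i,g_{N_i})$ and hence bounded, the first term is $O(\tfrac{1}{F_i^2}) \leq \frac{C}{T-t}$ and the second is controlled by the Li-Yau/Schwarz bound just derived.

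The remaining components are the normal-direction curvatures encoded in $\Ric(\nu,\nu) = -\frac{H''}{H} - \sum_i 2n_i \frac{F_i''}{F_i}$ and the individual sectional curvatures involving $\frac{H''}{H}$ and $\frac{F_i''}{F_i}$; these second-order terms are not directly controlled by a first-order gradient estimate. The main obstacle will be bounding these ``radial'' second-derivative curvature quantities by $\frac{C}{T-t}$. The plan here is to use the heat equation \eqref{eq:heat} for $F_i^2$ together with the already-established bounds: differentiating the K\"ahler relation $2F_iF_i' = q_i H$ and using $\D{F_i^2}{t} = \Delta_{g(t)}F_i^2 - 2k_i$, one expresses $\frac{F_i''}{F_i}$ and $\frac{H''}{H}$ in terms of $\dot{F_i^2}$, $\abs{\nabla F_i^2}^2$ and lower-order terms, and then bounds the time-derivative $\dot{F_i^2}$ via the monotonicity/linearity of $F_i^2$ established in the Schwarz estimate subsection (where $F_i^2$ is shown to be squeezed between linear functions $A - Bt$ of $t$). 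Since $\D{F_i^2}{t}$ is a bounded first-order-in-time quantity, combining it with the Li-Yau gradient bound converts $\Delta F_i^2$ into a controlled expression, and dividing by the Schwarz lower bound $F_i^2 \geq \frac{T-t}{C}$ produces the desired $\frac{1}{T-t}$ rate for the second-order curvature terms. Assembling all block components then gives $\abs{\Rm}_{g(t)}^2 \leq \frac{C}{(T-t)^2}$, completing the proof of Type I.
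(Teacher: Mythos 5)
There is a genuine gap, and you have in fact located it yourself: the ``radial'' second-derivative quantities $\frac{H''}{H}$ and $\frac{F_i''}{F_i}$, which enter $\Ric(\nu,\nu)$, $\Ric(\zeta^*,\zeta^*)$ and the sectional curvatures in the $\nu$- and $\zeta^*$-directions, are not controlled by the first-order Li--Yau/Schwarz machinery, and your plan for them does not close. Writing $\Delta F_i^2 = 2F_iF_i'' + 2(F_i')^2 + (\tr L)\,\partial_s F_i^2$, extracting $\frac{F_i''}{F_i}$ requires a uniform \emph{interior} bound on $\Delta_{g(t)}F_i^2 = \dot{F_i^2} + 2k_i$. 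The Schwarz subsection only computes $\Delta F_j^2 = \pm 2q_j$ \emph{on the boundary} $\partial M_0$ and deduces two-sided linear-in-$t$ bounds on $F_j^2$ itself via its monotonicity in $s$; this gives no pointwise bound on $\dot{F_i^2}$ away from $\partial M_0$. A maximum-principle bound on $\Delta F_i^2$ along the flow would require controlling the reaction term $\Ric * \nabla\nabla F_i^2$ in its evolution equation, i.e.\ precisely the curvature bound you are trying to prove, so the argument is circular as stated. The term $\frac{H''}{H}$ is worse still: the K\"ahler relation $q_iH = 2F_iF_i'$ turns $H''$ into third $s$-derivatives of $F_i$, for which no estimate is available. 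Finally, even the horizontal bound $\frac{1}{F_i^2}\Rm_{N_i}(F_iX_i,\dots) = O(F_i^{-2})$ uses that $X_i$ has unit $g$-length so that $\abs{X_i}_{g_{N_i}} \le F_i^{-1}$; this part is fine, but it is the only part of the curvature your estimates genuinely reach.

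The paper avoids this obstruction entirely by arguing by contradiction through a blow-up. Assuming Type II, one rescales by $K_i = \sup_M\abs{\Rm}_{g(t_i)}$ with $K_i(T-t_i)\to\infty$; the Li--Yau and Schwarz estimates (exactly the ones you derived) show that the rescaled O'Neill tensor $\abs{A}_{g_i(t)}^2 \le \frac{C}{K_i(T-t_i-K_i^{-1}t)} \to 0$ and that the rescaled horizontal curvature vanishes in the limit. Hence the pointed limit splits as $\Sigma^2\times N'$ with $N'$ flat, and $\Sigma$ is a two-dimensional steady soliton of positive scalar curvature, i.e.\ the cigar, which is ruled out by Perelman's no-local-collapsing theorem. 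In this scheme the problematic $\nu$- and $\zeta^*$-direction curvatures are never estimated directly; they are absorbed into the $\Sigma$-factor and disposed of by the soliton classification. If you wish to salvage a direct component-by-component proof, you would first need an independent a priori estimate of the form $\abs{F_i''/F_i} + \abs{H''/H} \le C/(T-t)$, which is not supplied by anything in the paper.
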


\begin{proof}
The first part follows from Proposition \ref{preservingansatz}. Now we prove the main result concerning the Type I singularity formation. Let $T < \infty$ be the extinction time of the flow. For any sequence $t_i \to T$, we consider the scaling factor $K_i := \sup_M \abs{\Rm}_{g(t_i)} = \abs{\Rm}(x_i,t_i)$ where $x_i \in M$, and let
\[g_i(t) := K_i g(t_i + K_i^{-1}t), \quad t \in [-K_it_i, K_i(T-t_i)).\]
From Corollary \ref{cor:|A|^2<|grad u|^2}, we have
\[\abs{A}_{g(t)}^2 \leq C\abs{\nabla \log F_j}_{g(t)}^2 = \frac{C\abs{\nabla F_j}^2_{g(t)}}{F_j^2}\]
for each $j$, so after rescaling by $K_i$, we get
\[\abs{A}_{g_i(t)}^2 = \frac{1}{K_i}\abs{A}_{g(t_i + K_i^{-1}t)}^2 \leq \frac{C\abs{\nabla F_j}^2_{g(t_i+K_i^{-1}t)}}{K_i F_j^2(t_i + K_i^{-1}t)} \leq \frac{C\abs{\nabla F_j}^2_{g(t_i+K_i^{-1}t)}}{K_i(T-t_i-K_i^{-1}t)}.\]

The last inequality follows from \eqref{eq:Schwarz}. From \eqref{eq:gradu/u}, by taking $u = F_j^2$, we get $\abs{\nabla F_j} \leq C$, and hence we conclude that
\begin{equation}
\label{eq:|A|^2<C/K(T-t)}
\abs{A}_{g_i(t)}^2 \leq \frac{C}{K_i(T-t_i - K_i^{-1}t)}.
\end{equation}

Therefore, if we assumed that the singularity is of Type II, i.e. $\sup_{M \times [0,T)} (T-t)\abs{\Rm}_{g(t)} = \infty$, then there exists a sequence $t_i \to T$ such that $K_i(T-t_i) \to \infty$, and hence for any fixed $t$ we have
\[\abs{A}_{g_i(t)}^2 \to 0.\]
This shows the rescaled sequence $(M, g_i(t),x_i)$ converges (after passing to subsequences) in pointed Gromov-Hausdorff sense to a steady Ricci soliton $(M_\infty, g_\infty(t), x_\infty)$ with $\abs{A}_{g_\infty(t)} \equiv 0$. Together with $\abs{T}_{g_\infty(t)} \equiv 0$, the metric $g_\infty(t)$ splits isometrically into a product manifold $(\Sigma \times N', g_1(t) \oplus g_2(t))$, where $\dim_\C \Sigma = 1$.

When $\{X,Y\}$ are orthonormal horizontal frame, then from \eqref{eq:Rm(XYYX)} we have:
\[\Rm_g(X,Y,Y,X) = \sum_j \frac{1}{F_j^2}\Rm_{N_i}(F_jX_j,F_jY_j,F_jY_j,F_jX_j) + \sum_j O\left(\frac{H^2}{F_j^4}\abs{X}_g^2\abs{Y}_g^2\right)\]
where $X_i = d\pi_i(X)$ and $Y_i = d\pi_i(Y)$. For the K\"ahler case, we have
\[q_j H = 2F_jF_j' = \frac{\partial}{\partial s}F_j^2 \implies \frac{H^2}{F_j^4} = \frac{1}{q_j^2}\abs{\nabla\log F_j^2}^2 = \frac{4}{q_j^2}\frac{\abs{\nabla F_j}^2}{F_j^2}.\]

Now consider a time-dependent orthonormal (with respect to $g(t)$) horizontal frame $\{X(t),Y(t)\}$. Express them as
\begin{align*}
X(t) & = \sum_j X_j(t), & Y(t) & = \sum_j Y_j(t)	
\end{align*}
where $X_j(t) = d\pi_j(X(t))$ and $Y_j(t) = d\pi_j(Y(t))$.

Now consider the rescaled frame which is orthonormal with respect to the rescaled metric $g_i(t)$:
\begin{align*}
X^{(i)}(t) & := \frac{X(t_i+K_i^{-1}t)}{\abs{X(t_i+K_i^{-1}t))}_{g_i(t)}} = \frac{1}{\sqrt{K_i}}\frac{X}{\abs{X}_{g(t_i+K_i^{-1}t)}}\\
Y^{(i)}(t) & := \frac{Y(t_i+K_i^{-1}t)}{\abs{Y(t_i+K_i^{-1}t))}_{g_i(t)}} = \frac{1}{\sqrt{K_i}}\frac{Y}{\abs{Y}_{g(t_i+K_i^{-1}t)}}.
\end{align*}
Here we abbreviate $X(t_i+K_i^{-1}t)$ by simply $X$ and $Y(t_i+K_i^{-1}t)$ by simply $Y$.

Then, we have
\begin{align*}
& \Rm_{g_i(t)}\big(X^{(i)}(t),Y^{(i)}(t),Y^{(i)}(t),X^{(i)}(t)\big)\\
& = K_i \Rm_{g(t_i+K_i^{-1}t)}\big(X^{(i)}(t),Y^{(i)}(t),Y^{(i)}(t),X^{(i)}(t)\big)\\
& = \frac{1}{K_i}\Rm_{g(t_i+K_i^{-1}t)}\left(\frac{X}{\abs{X}_{g(t_i+K_i^{-1}t)}},\frac{Y}{\abs{Y}_{g(t_i+K_i^{-1}t)}},\frac{Y}{\abs{Y}_{g(t_i+K_i^{-1}t)}},\frac{X}{\abs{X}_{g(t_i+K_i^{-1}t)}}\right)\\
& = \frac{1}{K_i}\frac{1}{\abs{X}^2_{g(t_i+K_i^{-1}t)}}\frac{1}{\abs{Y}^2_{g(t_i+K_i^{-1}t)}} \cdot \\
& \hskip 1cm\left\{\sum_j \frac{1}{F_j^2}\Rm_{N_j}(F_jd\pi_j(X),F_jd\pi_j(Y),F_jd\pi_j(Y),F_jd\pi_j(X))\right.\\
& \hskip 3cm \left. + \sum_j O\left(\frac{1}{q_j^2}\frac{\abs{\nabla F_j}^2}{F_j^2}\abs{X}^2_{g(t_i+K_i^{-1}t)}\abs{Y}^2_{g(t_i+K_i^{-1}t)}\right) \right\}.
\end{align*}
Here we abbreviate $F_j(t_i+K_i^{-1}t)$ by simply $F_j$.

For each $j$, there exists a dimensional constant $C_j > 0$ such that
\begin{align*}
& \abs{\Rm_{N_j}(F_jd\pi_j(X),F_jd\pi_j(Y),F_jd\pi_j(Y),F_jd\pi_j(X)}\\
& \leq C_j F_j^4 \abs{d\pi_j(X)}_{g_{N_j}}^2 \abs{d\pi_j(Y)}_{g_{N_j}}^2\\
& = C_j \abs{X}_{g(t_i+K_i^{-1}t)}^2\abs{Y}_{g(t_i+K_i^{-1}t)}^2
\end{align*}
by the Riemannian submersion property. 

Furthermore, \eqref{eq:gradu/u} implies $\abs{\nabla F_j} \leq C$. Combining all these estimates, we get:
\[\abs{\Rm_{g_i(t)}\big(X^{(i)}(t),Y^{(i)}(t),Y^{(i)}(t),X^{(i)}(t)\big)} \leq O\left(\frac{1}{K_iF_j^2(t_i+K_i^{-1}t)}\right).\]
By \eqref{eq:Schwarz}, we have
\[F_j^2(t_i+K_i^{-1}t) \geq \frac{T-(t_i + K_i^{-1}t)}{C} \implies K_iF_j^2(t_i+K_i^{-1}t) \geq \frac{K_i(T-t_i) - t}{C}.\]
As we assume Type II singularity, we have $K_i(T-t_i) - t \to \infty$ as $i \to \infty$. We conclude that $\Rm_{g_\infty(t)}(X,Y,Y,X) = 0$ for any horizontal vectors $X, Y \in TN'$, show the $(N', g_2(t))$-factor is flat.

By the split structure of $(M_\infty,g_\infty(t)) = (\Sigma \times N', g_1(t) \oplus g_2(t))$, and the fact that $(N',g_2(t))$ is flat and $\dim_{\R}\Sigma = 2$, the absolute-value of the scalar curvature $\abs{R_{g_\infty(t)}}$ then equals $\abs{\Rm}_{g_\infty(t)}$ (up to a constant factor). Also, the limit metric $g_\infty(t)$ is defined on $(-\infty, \infty)$ since $g_i(t)$ is defined on $[-K_it_i, K_i(T-t_i))$ and we have $K_i(T-t_i) \to \infty$. By standard parabolic maximum principle, any ancient solution to the Ricci flow must have either positive, or identically zero scalar curvature. The later case is ruled out since $\abs{\Rm}_{g_\infty(t)}$ at $x_\infty$ is not zero. This shows $(\Sigma, g_1(t))$ must have positive scalar curvature, and is a steady Ricci soliton. However, it shows $(\Sigma, g_1(t))$ must be a cigar soliton by the standard classification of two-dimensional steady Ricci soliton, which is impossible as a limit metric by the local non-collapsing theorem due to Perelman. This concludes that the flow $(M,g(t))$ must encounter Type I singularity in the case $\inf_{M \times [0,T)} u(x,t) > 0$.
\end{proof}

\section{Discussion on Blow-up Models of Singularity}
To further deepen our understanding of the singularity formation, it would be interesting, as in former works \cite{Fong14,Song15,GS16,jst23}, to study the pointed Cheeger-Gromov limit of the rescaled sequence
\[g_i(t) := K_i g(t_i + K_i^{-1}t).\]
Knowing that the singularity is of Type I, we can assume without loss of generality that
\[K_i = \frac{1}{T-t_i}.\]
Perelman's local non-collapsing theorem \cite{perelman1} and Hamilton-Cheeger-Gromov's compactness \cite{Hcomp} show that there exist diffeomorphisms $\Phi_i$'s such that $(M, K_i \Phi_i^*g_i(t))$ converges (after passing to subsequences) to a limit manifold and Ricci flow solution $(M, g_\infty(t))$.

In the current literature, the singularity formation for $\CP^1$-bundles over K\"ahler-Einstein manifolds with Calabi's symmetry can be divided into two major cases, namely (i) collapsing of $\CP^1$-fibers, or (ii) contraction of the zero/infinity-sections. Using the form of metrics in this article, i.e.
\[g(t) = ds^2 + H(s,t)^2 \eta \otimes \eta + F(s,t)^2 \pi^*g_N,\]
the fiber-collapsing case corresponds to the situation that $\displaystyle{\lim_{t \to T} \sup H(\cdot, t) = 0}$ and $\displaystyle{\inf F(\cdot,\cdot) > 0}$, whereas the contracting of zero/infinity-sections case corresponds to $\displaystyle{\lim_{t \to T} \inf F(\cdot, t) = 0}$ and $\displaystyle{\inf H(\cdot,\cdot) > 0}$.

Now consider $\CP^1$-bundles over a \emph{product} of K\"ahler-Einstein manifolds $(N := N_1 \times \cdots \times N_r, \oplus_k \pi_k^* g_{N_k})$ with metrics constructed by the ansatz \eqref{eq:DW}:
\[g(t) = ds^2 + H(s,t)^2 \eta \otimes \eta + \sum_{k=1}^r F_k(s,t)^2 \pi_k^* g_{N_k}, \quad s \in [s_0, s_1], t \in [0,T).\]
We expect that the singularity formation in the K\"ahler case will be divided into three major cases:
\begin{enumerate}[(i)]
	\item collapsing of $\CP^1$-fibers, where $H$ converges to $0$ as $t \to T$ while each $F_k$ is bounded away from $0$,
	\item contraction of the entire zero/infinity section, where $H$ is bounded away from $0$ as $t \to T$, while each $F_k(s_0, t) \to 0$ (if each $q_k > 0$), or each $F_k(s_1, t) \to 0$ (if each $q_k < 0$),
	\item contraction of \emph{some} but not all $g_N$-components of the zero/infinite-section, which will be discussed in more detail below.
\end{enumerate}

Given that we have proved that the singularity is of Type I, the singularity model of case (i) is the direct product $\CP^1 \times \C^{\dim_\C N}$ as in the case of \cite{Fong14}. The proof is similar to that in \cite{Fong14} (see also the conformal submersion adoptation in \cite{Hoan}). The key idea is that we assume in (i) that each $F_k$ is bounded away from $0$, so by Corollary \ref{cor:|A|^2<|grad u|^2}, we have
\[\abs{A}_{g(t)}^2 \leq C\sum_k \abs{\nabla\log F_k}^2 \leq \sum_k O\left(\abs{\nabla F_k^2}^2\right).\]
As each $F_k^2$ satisfies \eqref{eq:gradu}, so each $\abs{\nabla F_k^2}$ is uniformly bounded. These shows $\abs{A}_{g(t)}^2$ is uniformly bounded and hence $\abs{A}_{g_i(t)}^2 = \frac{1}{K_i^2}\abs{A}^2_{g(t_i + K_i^{-1}t)} \to 0$ as $i \to \infty$. This shows the limit model $(M_\infty, g_\infty(t))$ splits into a product of $(\Sigma_1^2 \times \Sigma_2^{2\dim_\C N}, g_1(t) \oplus g_2(t))$. Similar to \cite{Fong14}, we can then identify that $(\Sigma_2, g_2(t))$ is flat by the expression of Riemann curvature \eqref{eq:Rm(XYYX)} which shows, under the fiber-collapsing assumption, $\abs{\Rm_{g(t)}(X,Y,Y,X)} \leq C\abs{X}^2\abs{Y}^2$ for any horizontal vectors $X$ and $Y$ before rescaling, and hence after rescaling the horizontal component of $\Rm_{g_{\infty(t)}}$ would vanish.

For the volume non-collapsing cases (ii) and (iii), we expect that case (ii), where all of $F_k(s_0, t)$'s converge to $0$ as $t \to T$, would be similar to the contracting divisor case in \cite{GS16,jst23} where the whole zero section contracts to a point. By adopting the proofs in \cite{GS16,jst23}, it should be possible to show that the blow-up limit should be a shrinking Ricci soliton on the total space of a line bundle over $N$ constructed by Dancer-Wang in \cite{DW11coho}. However, case (iii), where some but not all of $F_k(s_0,t)$'s converge to $0$ while others remain positive at $t = T$, seems to be more subtle and the current literature does not seem to give any suggestion on what kind of blow-up limits will be obtained. One possibility is that it would be a shrinking Ricci soliton on the total space of a line bundle over a product of some compact K\"ahler-Einstein manifolds and a flat space, or on the product of a flat space and the total space of a line bundle over a product of compact K\"ahler-Einstein manifolds. It would be an interesting direction for further research.

\section{Appendix}
\begin{proof}[Proof of Lemma \ref{lm1}]
	First for a submersion $\pi$ and the fact that $\pi_*V = 0$, we have
	\[\pi_*[X,V] = [\pi_*X, \pi_*V] = 0.\]
	Therefore, $[X,V] \in \ker\pi_* = \V$. Then, by the Koszul's formula, we have
	\begin{align*}
		2g(\nabla_X Y, V) & = X\big(g(Y,V)\big) + Y\big(g(X,V)\big) - V\big(g(X,Y)\big)\\
		& \hskip 0.5cm + g([X,Y],V) - g([X,V],Y) - g([Y,V],X)\\
		& = -V\left(\sum_i F_i^2 \pi_i^* g_{N_i}(X,Y)\right) + g([X,Y],V)\\
		& = -\sum_i g(\nabla F_i^2, V) \pi_i^*g_{N_i}(X,Y) + g([X,Y],V).
	\end{align*}
	Since the above holds for all $V \in \V$, we have
	\begin{equation}
		\label{eq:Vnabla_XY}
		2\V(\nabla_X Y) = - \sum_i \V(\nabla F_i^2) \pi_i^*g_{N_i}(X,Y) + \V[X,Y].
	\end{equation}
	This proves the first equality since $A_XY = \V(\nabla_X Y)$ for any $X, Y \in \H$. For the second equality, we relate $\V [X,Y]$ with $d\omega$ and consider
	\begin{align*}
		d\omega(X,Y,JV) & = X\big(\omega(Y,JV)\big) - Y\big(\omega(X,JV)\big) + JV\big(\omega(X,Y)\big)\\
		& \hskip 0.5cm -\omega\big([X,Y],JV\big) + \omega\big([X,JV],Y\big) - \omega\big([Y,JV],X\big)\\
		& = X\big(g(JY,JV)\big) - Y\big(g(JX,JV)\big) + JV\big(g(JX,Y)\big)\\
		& \hskip 0.5cm - g\big(J[X,Y],JV\big) + g\big(J[X,JV],Y\big) - g\big(J[Y,JV],X\big)\\
		& = JV\big(g(JX,Y)\big) - g\big(J[X,Y],JV\big)\\
		& = JV\left(\sum_{i=1}^r F_i^2 \pi_i^*g_{N_i}(JX,Y)\right) - g\big([X,Y], V\big).
	\end{align*}
	Therefore, we have
	\begin{align*}
		g\big([X,Y],V\big) & = \sum_{i=1}^r g\big(\nabla F_i^2, JV\big) \pi_i^*g_{N_i}(JX,Y)\big) - d\omega(X,Y,JV)\\
		& = -2\sum_{i=1}^r g\big(J\nabla \log F_i, V) F_i^2 \pi_i^*\omega_{N_i}(X,Y) - d\omega(X,Y,JV),
	\end{align*}
	and hence
	\begin{equation}
		\label{eq:V[X,Y]}
		\V[X,Y] = - 2\sum_{i=1}^r F_i^2 \pi_i^*\omega_{N_i}(X, Y) \V J\nabla \log F_i -\V\left(d\omega(X,Y,J(\cdot))\right)^\sharp.
	\end{equation}
	It completes the second equality.
	
	Next, we take any $Y \in \H$ and consider
	\begin{align*}
		g\big(\nabla_X V, Y\big) & = X\big(g(V,Y)\big) - g(V, \nabla_X Y)\\
		& = -g(V, A_XY).
	\end{align*}
	Hence, we have $\H\nabla_XV = -g\big(V, A_X(\cdot)\big)^\sharp$. The last equality is trivial.
\end{proof}
\begin{proof}[Proof of Corollary \ref{cor:|A|^2<|grad u|^2}]
	It follows from the fact that if $\{e_i\}$ is an orthonormal (with respect to $g$) frame of $\H$, then
	\begin{align*}
		\abs{F_k^2\pi_k^*\omega_{N_k}(e_i,e_j)} & = \abs{F_k^2 \pi_k^*g_{N_k}(Je_i,e_j)}\\
		& \leq F_k^2 \abs{d\pi_k(Je_i)}_{g_{N_k}}\abs{d\pi_k(e_j)}_{g_{N_k}}\\
		& \leq \abs{d\pi_k(Je_i)}_g \abs{d\pi_k(e_j)}_g\\
		& \leq \abs{Je_i}_g \abs{e_j}_g = 1.
	\end{align*}
	Similarly, we have
	\[\abs{F_k^2\pi_k^*\omega_{N_k}(e_i,e_j)} \leq 1.\]
	The corollary then follows from Lemma \ref{lm1}.
\end{proof}

\begin{proof}[Proof of Lemma \ref{computeAmul}]
	As the tensor $\tilde{A}$ are related to the tensor $A$ by $\tilde{A} = \textup{proj}_{TP_s}(A)$, and that $\nabla \log F_i \in \text{span}\{\nu\}$, by Lemma \ref{lm1} we have
	\[\tilde{A}_{X}Y = \frac{1}{2}\tilde{\V}[X, Y].\]
	Taking the exterior derivative of $\eta^*$, we get:
	\begin{align*}
		d\eta^\ast(X_i, Y) &= (\tilde{\nabla}_{X_i} \eta^\ast)Y-(\tilde{\nabla}_Y \eta^\ast) X_i\\
		&= \tilde{\nabla}_{X_i} (\eta^\ast(Y))-\eta^\ast(\tilde{\nabla}_{X_i} Y)-\tilde{\nabla}_Y(\eta^\ast(X_i))+\eta^\ast(\tilde{\nabla}_Y X_i)\\
		&= -\eta^\ast([X_i, Y])=-g_s(\zeta^\ast, [X_i, Y]) = -2g_s(\tilde{A}_{X_i}Y,\zeta^*).
	\end{align*}
	On the other hand, as $d\eta = \sum_i q_i\pi_i^*\omega_{N_i}$ and $dH(X_i) = dH(Y) = 0$, we have
	\begin{align*}
		d\eta^\ast (X_i, Y)&= (dH \wedge \eta + Hd\eta)(X_i, Y)=H q_i \omega_{N_i}(X_i, Y)\\
		& = \frac{q_iH}{F_i^2}g(JX_i,Y)
	\end{align*}
	completing the proof of the first equality.
	
	The second equality follows from the fact that $g_s(\tilde{A}_X \zeta^\ast, Y) =-g_s(\zeta^\ast, \tilde{A}_X Y)$.

\end{proof}
\begin{proof}[Proof of Proposition \ref{Rcdeformed1}]
	From \cite[Cor. 2.5.17]{BGbookSasakian08}, we have
	\[K_s(X_i,Z_i) = K_{N_i}(X_i,Z_i) - 3\abs{\tilde{A}_{X_i}Z_i}_{g_s}^2.\]
	Note that $\tilde{A}_{X_i}Z_i$ has only $\zeta^*$-component, we have
	\[\abs{\tilde{A}_{X_i}Z_i}^2_{g_s} = g(\tilde{A}_{X_i}Z_i,\zeta^*)^2 = \frac{q_i^2H^2}{4F_i^4} g_s(J_{N_i}X_i,Z_i)^2\]
	from Lemma \ref{computeAmul}, completing the proof of the first result.
	
	Similarly, using the fact that $T = 0$ under the ansatz \eqref{eq:DW}, Corollary 2.5.17 in \cite{BGbookSasakian08} shows
	\[K_s(X_i,\zeta^*) = \abs{\tilde{A}_{X_i}\zeta^*}^2 = \abs{\frac{q_iH}{2F_i^2}JX_i}^2 = \frac{q_i^2H^2}{4F_i^4}\abs{X_i}^2_{g_s}\]
	proving the second result.
	
	From \cite[Theorem 2.5.18]{BGbookSasakian08} and noting that $T = 0$ under ansatz \eqref{eq:DW}, we have
	\begin{align*}
		\widetilde{\Ric}(X_i,Y_i) & = \Ric_{N_i}(X_i,Y_i) - 2g_s(\tilde{A}_{X_i}, \tilde{A}_{Y_i})\\
		& = k_ig_{N_i}(X_i,Y_i)- 2g_s(\tilde{A}_{X_i}, \tilde{A}_{Y_i})\\
		& = \frac{k_i}{F_i^2}g_s(X_i,Y_i) - 2g_s(\tilde{A}_{X_i}, \tilde{A}_{Y_i}).
	\end{align*}
	Here we have used the fact that $(N_i,g_{N_i})$ is a K\"ahler-Einstein manifold. From Lemma \ref{computeAmul}, we have
	
	\begin{align*}
		g_s(\tilde{A}_{X_i}, \tilde{A}_{Y_i}) & = \left(\frac{q_iH}{2F_i^2}\right)^2 g_s(JX_i,JY_i) = \frac{q_i^2H^2}{4F_i^4} g_s(X_i,Y_i)
	\end{align*}
	completing the proof of the third equation.
	
	Finally, from \cite[Theorem 2.5.18]{BGbookSasakian08} again and observing that $T = 0$ and fibers of $\pi : P_s \to N$ is 1-dimensional (hence its Ricci curvature vanishes), we have
	\[\widetilde{\Ric}(\zeta^*,\zeta^*) = g_s(\tilde{A}\zeta^*, \tilde{A}\zeta^*)\]
	Suppose $\{e_a, Je_a\}$ is an orthonormal basis for $\title{\H}$ such that each $e_a$ is in exactly one of $TN_i$, then we have
	\begin{align*}
		& g_s(\tilde{A}\zeta^*,\tilde{A}\zeta^*)\\
		& = \sum_a \left(\abs{\tilde{A}_{e_a}\zeta^*}_{g_s}^2+\abs{\tilde{A}_{Je_a}\zeta^*}_{g_s}^2\right)\\
		& = \sum_i \sum_{e_a \in TN_i}\left(\abs{\tilde{A}_{e_a}\zeta^*}_{g_s}^2+\abs{\tilde{A}_{Je_a}\zeta^*}_{g_s}^2\right)\\
		& = \sum_i 2n_i \left(\frac{q_iH}{2F_i^2}\right)^2
	\end{align*}
	proving the last equality.
\end{proof}

\bibliographystyle{alpha}
\bibliography{bioMorse}

\end{document}